 \newtheorem{thm}{Theorem}[section]
 \newtheorem{lem}[thm]{Lemma}
 \theoremstyle{definition}
 \theoremstyle{remark}
 \newtheorem{rem}[thm]{Remark}
 \newtheorem{ex}{Example}
 \theoremstyle{claim}
 \numberwithin{equation}{section}
\numberwithin{equation}{section}
\newcounter{rom}
\renewcommand{\therom}{(\roman{rom})}
{\end{list}}
\title{A differentiable sphere theorem for compact Lagrangian submanifolds in complex Euclidean space and complex projective space}
\author{Haizhong
Li\thanks{Supported by Tsinghua University--K.U.Leuven Bilateral
scientific cooperation Fund.}
\thanks {Supported by NSFC grant No. 10971110} \and Xianfeng Wang\protect\footnotemark[1]
\thanks {Supported by NSFC grant No. 11171175}
}
\date{}
\begin{document}

\maketitle


\begin{abstract}\noindent We obtain a new differentiable sphere theorem for compact Lagrangian submanifolds
in complex Euclidean space and complex projective space.
\end{abstract}

{{\bfseries Key words}:  {\em differentiable sphere theorem,
Lagrangian submanifold, Ricci flow, mean curvature, second
fundamental form}.

{\bfseries Subject class: } 53C20, 53C40.}

\section{Introduction}
The sphere theorem for Riemannian manifolds was firstly studied by
Rauch(\cite{Ra}) in 1951, since then there have been many excellent
works on sphere theorems for Riemannian manifolds and
submanifolds(see
\cite{Andrews02}-\cite{Andrews2},\cite{Berger0}-\cite{Brendle4},\cite{Grove}-\cite{Kli},\cite{Micallef},\cite{Shiohama},\cite{Shiohama1}).

B. Andrews and C. Baker proved in \cite{Andrews} by the method of
mean curvature flow that, for a compact n-dimensional submanifold in
$\mathbb{R}^{n+p}$, we denote by $S$ the norm square of the second
fundamental form and $H$ the mean curvature, if
\begin{equation}\label{1.1}
S\le a n^2H^2, ~ \text{where} ~ a\le \frac{4}{3n}(2\le n\le 4);a\le
\frac{1}{n-1}(n\ge 4),
\end{equation}
then $M$ is diffeomorphic to $\mathbb{S}^n$.

S. Brendle and R. Schoen studied (\cite{Brendle2}-\cite{Brendle4})
the convergence theory for Ricci flow and its application to the
differentiable sphere theorem, they proved the following results,
which is very important in the proof of our main theorem for $n\ge
4$.

\begin{thm}\label{Lemma2.2}(see Theorem 2 in \cite{Brendle3})
Let $(M,g_0)$ be a compact, locally irreducible Riemannian manifold
of dimension $n(\geq4)$. Assume that $M\times \mathbb{R}^2$ has
nonnegative isotropic curvature, i.e.,
$$R_{1313}+\lambda^2 R_{1414} +\mu^2 R_{2323} + \lambda^2\mu^2R_{2424}-
2\lambda\mu R_{1234}\geq0$$ for all orthonormal four-frames
$\{e_1,e_2,e_3,e_4\}$ and all $\lambda,\mu\in[-1,1]$. Then one of
the following statements holds:

$(i)$ M is diffeomorphic to a spherical space form.

$(ii)$ $n=2m$ and the universal cover of M is a K\"{a}hler manifold
biholomorphic to $\mathbb{C}P^m$.

$(iii)$ The universal cover of M is isometric to a compact symmetric
space.
\end{thm}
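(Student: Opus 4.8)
\emph{Proof sketch.} The plan is to run Hamilton's Ricci flow $\partial_t g(t)=-2\,\mathrm{Ric}_{g(t)}$ with $g(0)=g_0$; since $M$ is compact this produces a solution on a maximal interval $[0,T)$. The first and most delicate step would be to show that the curvature hypothesis is preserved along the flow. To this end one reformulates ``$M\times\mathbb{R}^2$ has nonnegative isotropic curvature'' as the assertion that, at each point, the algebraic curvature operator $R$ lies in a fixed closed convex $O(n)$-invariant cone $\mathcal{C}$ in the space of curvature operators. Hamilton's maximum principle for the reaction--diffusion equation $\partial_t R=\Delta R+Q(R)$, with $Q(R)=R^2+R^{\#}$, then reduces preservation of the hypothesis to the purely algebraic statement that $\mathcal{C}$ is invariant under the ODE $\tfrac{d}{dt}R=Q(R)$, i.e.\ that $Q(R)$ lies in the tangent cone to $\mathcal{C}$ at every $R\in\partial\mathcal{C}$. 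Proving this ODE-invariance --- by analyzing the first and second variations of the isotropic curvature of $R\oplus 0$ on $\mathbb{R}^{n+2}$ along a null four-frame --- is the main obstacle, and is exactly where the estimates of Brendle--Schoen (and, independently, Nguyen) are needed.

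Granting invariance of $\mathcal{C}$, the next step would be to extract a dichotomy via a strong maximum principle on the boundary of the invariant cone. For $t>0$, either the curvature operator lies at every point in the interior of a strictly smaller invariant cone --- so $g(t)$ develops positive isotropic curvature on $M\times\mathbb{R}$, the strict positivity demanded by the convergence theory --- or it stays on $\partial\mathcal{C}$ and the directions in $\Lambda^2 TM$ along which the isotropic-curvature form degenerates form a nontrivial parallel sub-bundle. In the first alternative I would invoke the convergence theory for the Ricci flow: the pinching improves along the flow (B\"ohm--Wilking-type pinching sets, in the form developed by Brendle), the normalized flow converges to a metric of constant positive sectional curvature, and $M$ is therefore diffeomorphic to a spherical space form, which is alternative $(i)$.

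In the second alternative the parallel sub-bundle of $\Lambda^2 TM$ forces, via Ambrose--Singer, a reduction of the restricted holonomy group of $(M,g(t))$. Since $M$ is locally irreducible, Berger's holonomy classification then leaves only two essentially different possibilities: $(M,g(t))$ is locally symmetric --- which, after passing to the universal cover, is alternative $(iii)$ --- or the holonomy is a unitary group $U(m)$ with $n=2m$, the K\"ahler case; the remaining entries on Berger's list are incompatible with the curvature hypothesis (being Ricci-flat, or, for quaternion-K\"ahler holonomy, forced to be symmetric of compact type). In the K\"ahler case one would use nonnegativity of the isotropic curvature of $M\times\mathbb{R}^2$ together with local irreducibility to see that the curvature operator carries nonnegative holomorphic bisectional curvature with enough positivity that, by the solution of the (generalized) Frankel conjecture (Mok) together with irreducibility, the universal cover must be biholomorphic to $\mathbb{C}P^m$ --- alternative $(ii)$. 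Passing to the universal cover throughout is harmless, since every hypothesis used is local. Apart from the ODE-invariance of $\mathcal{C}$, I expect the real technical pressure points to be the precise form of the strong maximum principle on $\partial\mathcal{C}$ and the step upgrading ``holonomy $U(m)$'' to ``biholomorphic to $\mathbb{C}P^m$''.
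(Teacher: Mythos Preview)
The paper does not prove this statement at all: Theorem~\ref{Lemma2.2} is quoted verbatim from Brendle--Schoen \cite{Brendle3} and used as a black box in the proof of Theorem~\ref{main}. So there is no ``paper's own proof'' to compare your proposal against.

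That said, your sketch is a fair outline of the actual Brendle--Schoen argument in \cite{Brendle3}. The ingredients you name --- invariance of the cone $\mathcal{C}$ under the curvature ODE $R'=Q(R)$ (the Brendle--Schoen/Nguyen computation), Hamilton's tensor maximum principle, the strong maximum principle to force either strict positivity or a parallel subbundle, Berger's holonomy classification for the degenerate branch, and Mok's resolution of the generalized Frankel conjecture to handle the $U(m)$ case --- are precisely the components of their proof. One small correction: in the non-degenerate branch the relevant open condition is that $M\times\mathbb{R}^2$ (not $M\times\mathbb{R}$) has \emph{positive} isotropic curvature, and the convergence then comes from Brendle's general convergence theorem \cite{Brendle}, not merely B\"ohm--Wilking. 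Since the present paper only \emph{uses} this result, your sketch is more than what is required here; the appropriate ``proof'' in context is simply the citation.
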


In \cite{Xu-Gu2}, by applying Theorem \ref{Lemma2.2}, H. W. Xu and
J. R. Gu proved that (see also \cite{Xu-Gu})
\begin{thm}\label{thm1.02}(see Theorem 1.4 in \cite{Xu-Gu2}) Let $M$ be an $n(\geq4)$-dimensional
oriented complete submanifold in an $N$-dimensional simply connected
space form $F^{N}(c)$ with $c\ge0$. Assume that
$$S\le\frac{n^2H^2}{n-1}+2c,$$ where $c+H^2>0$. We have\\
$(i)$ If $c=0$, then M is either diffeomorphic to $S^n$, $\mathbb{R}^n$, or locally isometric to $S^{n-1}(r)\times \mathbb{R}$.\\
$(ii)$ If M is compact, then M is diffeomorphic to $S^n$ .
\end{thm}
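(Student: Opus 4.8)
The plan is to use the Gauss equation to recast the pinching hypothesis as the curvature condition of Theorem~\ref{Lemma2.2} --- that $M\times\mathbb{R}^{2}$ has nonnegative isotropic curvature --- and then to run Brendle--Schoen's classification, invoking the pinching a second time to discard every resulting possibility except round spheres (and, in the noncompact flat case, a single line factor).

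First, fix $p\in M$, an orthonormal four-frame $\{e_{1},e_{2},e_{3},e_{4}\}$ of $T_{p}M$, and an adapted normal frame, so that the Gauss equation reads $R_{ijkl}=c(\delta_{ik}\delta_{jl}-\delta_{il}\delta_{jk})+\sum_{\alpha}(h^{\alpha}_{ik}h^{\alpha}_{jl}-h^{\alpha}_{il}h^{\alpha}_{jk})$. Substituting into the left-hand side of the isotropic curvature inequality and completing squares yields, for all $\lambda,\mu$,
\begin{multline*}
R_{1313}+\lambda^{2}R_{1414}+\mu^{2}R_{2323}+\lambda^{2}\mu^{2}R_{2424}-2\lambda\mu R_{1234}
=(1+\lambda^{2})(1+\mu^{2})\,c\\
+\sum_{\alpha}\Big[(h^{\alpha}_{11}+\mu^{2}h^{\alpha}_{22})(h^{\alpha}_{33}+\lambda^{2}h^{\alpha}_{44})-(h^{\alpha}_{13}+\lambda\mu\,h^{\alpha}_{24})^{2}-(\lambda h^{\alpha}_{14}-\mu h^{\alpha}_{23})^{2}\Big].
\end{multline*}
As $c\ge 0$, the first term is nonnegative, so the heart of this step is the algebraic assertion that, once $S\le\frac{n^{2}H^{2}}{n-1}+2c$, the whole right-hand side is nonnegative for every four-frame and every $\lambda,\mu\in[-1,1]$. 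I would prove this by fixing $S$ and $n^{2}H^{2}$ and minimizing the quadratic form in the $h^{\alpha}_{ij}$ by Lagrange multipliers --- the same mechanism that keeps the pinching cone invariant under the mean curvature flow in \cite{Andrews} --- and I would carefully extract the equality case: the right-hand side vanishes for some four-frame and some $\lambda,\mu$ only if, in a suitable adapted normal frame, the second fundamental form is a single diagonal matrix with $n-1$ equal eigenvalues and one zero eigenvalue, all other normal components being zero. Hence $M\times\mathbb{R}^{2}$ has nonnegative isotropic curvature.

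Now suppose $M$ is compact (part~(ii)); when $c=0$ this is already contained in \eqref{1.1} with $a=\frac{1}{n-1}$, so the new point is $c>0$. The pinching also forces $\mathrm{Ric}\ge 0$, so by the Cheeger--Gromoll structure theorem a finite cover of $M$ splits as $T^{q}\times M_{1}\times\cdots\times M_{k}$ with each $M_{i}$ compact, simply connected, irreducible; thus $\widetilde M=\mathbb{R}^{q}\times M_{1}\times\cdots\times M_{k}$. Each $M_{i}\times\mathbb{R}^{2}$ is totally geodesic in $\widetilde M\times\mathbb{R}^{2}$, hence again of nonnegative isotropic curvature; for $\dim M_{i}\le 3$ this gives $M_{i}\cong S^{2}$ or $S^{3}$ (Hamilton in dimension $3$), and for $\dim M_{i}\ge 4$ Theorem~\ref{Lemma2.2} applies. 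I then use the pinching to eliminate everything but one sphere. One shows, via the Gauss and Codazzi equations together with the equality analysis of the first step, that a flat factor or a nontrivial Riemannian product would force $M$ to be locally isometric to the noncompact cylinder $S^{n-1}(r)\times\mathbb{R}$ (the subcase of a totally geodesic factor being excluded by $c+H^{2}>0$), which is impossible for compact $M$; so $\widetilde M$ is irreducible. The K\"ahler case~(ii) and the non-spherical symmetric-space case~(iii) of Theorem~\ref{Lemma2.2} are excluded the same way: a K\"ahler manifold of complex dimension $\ge 2$, and any irreducible compact symmetric space other than a sphere, carries a four-frame of zero isotropic curvature at every point, so the equality analysis again produces the noncompact cylinder, a contradiction. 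Therefore $\widetilde M=S^{n}$ and $M$ is diffeomorphic to a spherical space form; finally, since equality everywhere would again give the cylinder, the pinching is strict somewhere, which yields positive sectional curvature (from the Gauss equation and the same optimization), whence $\pi_{1}(M)=1$ --- by Synge's theorem and orientability when $n$ is even, and by the analogous argument when $n$ is odd --- so $M$ is diffeomorphic to $S^{n}$.

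Finally, if $c=0$ and $M$ is complete and noncompact, nonnegative isotropic curvature of $M\times\mathbb{R}^{2}$ implies in particular that $M$ has nonnegative sectional curvature, so by the soul theorem $M$ is diffeomorphic to the total space of the normal bundle of a compact soul $\Sigma$; if $\Sigma$ is a point then $M\cong\mathbb{R}^{n}$, and if $\dim\Sigma\ge 1$ then the attendant flat factor together with the pinching and the product analysis of the Gauss equation forces $M$ to be locally isometric to $S^{n-1}(r)\times\mathbb{R}$ --- which is part~(i). The step I expect to be hardest is the algebraic inequality of the first step together with its sharp equality characterization: the entire classification rests on knowing exactly which second fundamental forms saturate the pinching, and it is there that the precise constant $\frac{1}{n-1}$, rather than any larger one, is indispensable; a close second is the bookkeeping in the compact case needed to exclude flat factors, nontrivial products, the $\mathbb{C}P^{m}$ case, and the exotic symmetric spaces.
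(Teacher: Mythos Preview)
The paper does not prove this theorem at all: Theorem~\ref{thm1.02} is quoted in the introduction as Theorem~1.4 of Xu--Gu \cite{Xu-Gu2} and is used only as background motivation. There is therefore no ``paper's own proof'' against which to compare your proposal. What the present paper \emph{does} prove is its Lagrangian analogue, Theorem~\ref{main}, and the strategy there---Gauss equation $\Rightarrow$ nonnegative isotropic curvature of $M\times\mathbb{R}^{2}$ $\Rightarrow$ Brendle--Schoen classification $\Rightarrow$ elimination of the non-spherical alternatives via the equality analysis---is indeed the same architecture you outline, and is also the architecture of \cite{Xu-Gu2}. So at the level of strategy your plan is the right one.

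That said, two places in your sketch are not yet proofs. First, the algebraic core---that $S\le\frac{n^{2}H^{2}}{n-1}+2c$ forces the displayed isotropic-curvature combination to be nonnegative, with equality only for the codimension-one ``cylinder'' second fundamental form---is asserted (``I would prove this by Lagrange multipliers'') but not carried out; in the Lagrangian setting the paper spends the bulk of Lemma~\ref{Lemma2.5} on exactly this computation, and the general submanifold version in \cite{Xu-Gu2} is likewise a nontrivial explicit calculation, not a soft optimization. Second, your endgame for compact $M$ is not right: after concluding that $\widetilde M$ is diffeomorphic to $S^{n}$ you try to deduce $\pi_{1}(M)=1$ from Synge's theorem, but Synge gives simple connectivity only in even dimensions, and positive sectional curvature \emph{at one point} is not enough in any case. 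The way \cite{Xu-Gu2} (and this paper, in the proof of Theorem~\ref{main}) actually closes the argument is different: one shows that the induced metric has \emph{positive} isotropic curvature (everywhere, using the sharp equality analysis and, when $c=0$, Seshadri's Lemma~\ref{Lemma2.3} to upgrade quasi-positive to positive), then invokes Micallef--Moore (Lemma~\ref{Micallef}) on the compact universal cover $\widetilde M$ to get $\widetilde M$ homeomorphic to $S^{n}$, and finally feeds this into Theorem~\ref{Lemma2.2} together with Micallef--Wang (Lemma~\ref{Micallef2}) to rule out the K\"ahler and symmetric-space alternatives. The conclusion $M\cong S^{n}$ (rather than merely a space form) in \cite{Xu-Gu2} uses additional input specific to that paper, not Synge.
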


Let $\bar{M}^n(4c)$ be a complex space form with constant
holomorphic sectional curvature $4c$, when $c=0$,
$\bar{M}^n(4c)=\mathbb{C}^n$; when $c>0$,
$\bar{M}^n(4c)=\mathbb{CP}^n$; when $c<0$,
$\bar{M}^n(4c)=\mathbb{CH}^n$.

For any Lagrangian submanifold $M$ in a complex space form
$\bar{M}^n(4c)$, the norm square of the second fundamental form and
the squared mean curvature satisfy the following inequality:
\begin{equation}\label{1.4}
S\geq \frac{3n^2H^2}{n+2},
\end{equation}
and the equality holds if and only if $M$ are totally geodesic
submanifolds or Whitney spheres in complex forms (see \cite{CU1},
\cite{BCM} and \cite{RU} for $c=0$; see \cite{Ch1996} and
\cite{Ch-V} for $c\neq0$).

The explicit expressions of the Whitney spheres in $\mathbb{C}^n$ or
in $\mathbb{CP}^n$ are the following examples.
\begin{ex}
\textit{Whitney sphere}  in $\mathbb{C}^n$ (see  \cite{BCM},
\cite{CU1}, \cite{RU}, \cite{LV}). It is defined as the Lagrangian
immersion of the unit sphere $\mathbb{S}^n$, centered at the origin
of $\mathbb{R}^{n+1}$, in $\mathbb{C}^n$, given by
\begin{equation}\phi:\mathbb{S}^n\to
\mathbb{C}^n:\phi(x_1,x_2,\ldots,x_n,x_{n+1})=\frac{1+ix_{n+1}}{1+x_{n+1}^2}(x_1,\ldots,x_n).\label{11}\end{equation}
\end{ex}

\begin{ex}
\textit{Whitney spheres} in $\mathbb{CP}^n$ (see \cite{Ch-V},
\cite{CMU2}, \cite{LV}). They are a one-parameter family of
Lagrangian spheres in $\mathbb{CP}^n$, given by
$$\bar{\phi}_{\theta}:\mathbb{S}^n\to
\mathbb{CP}^n(4):$$
\begin{equation}\bar{\phi}_{\theta}(x_1,x_2,\ldots,x_n,x_{n+1})=\pi\circ\Big(\frac{(x_1,\ldots,x_n)}{c_{\theta}+i
s_{\theta}x_{n+1}};\frac{s_{\theta}c_{\theta}(1+x_{n+1}^2)+ix_{n+1}}{c_{\theta}^2+s_{\theta}^2x_{n+1}^2}\Big),\label{12}
\end{equation} where $\theta\ge0$,
$c_{\theta}=\cosh \theta,~s_{\theta}=\sinh \theta$,
$\pi:\mathbb{S}^{2n+1}(1)\to \mathbb{CP}^n(4)$ is the Hopf
fibration.
\end{ex}

It is well-known that(see \cite{Smo11}), there are no self-shrinking
Lagrangian spheres in $\mathbb{C}^n$, if $n>1$. Our aim in this
paper is to prove a differentiable sphere theorem with weakly
pinched conditions
 for compact Lagrangian submanifolds in
$\mathbb{C}^n$ or in $\mathbb{CP}^n$, and the theorem is also valid
for Whitney spheres. In fact, we prove the following theorem:
\begin{thm}\label{main}
Let $M$ be an $n(\geq 3)$-dimensional compact Lagrangian submanifold
in a complex space form $\bar{M}^n(4c)(c\ge 0)$. We denote by $S$
the norm square of the second fundamental form and $H$ the mean
curvature. Assume that
\begin{equation}\label{pinching}
S\le\frac{3n^2H^2}{n+\frac{3}{2}}+2c,
\end{equation}
then $M$ is diffeomorphic to a spherical space form. In particular,
if $M$ is simply connected, then $M$ is diffeomorphic to
$\mathbb{S}^n$.
\end{thm}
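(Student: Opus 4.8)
The plan is to show that the pinching condition \eqref{pinching} forces $M\times\mathbb{R}^2$ to have nonnegative isotropic curvature, and then to invoke Theorem \ref{Lemma2.2} when $n\ge4$ and Hamilton's theorem on compact $3$-manifolds of positive Ricci curvature when $n=3$. Fix a local orthonormal frame $\{e_1,\dots,e_n\}$ of $TM$ with associated normal frame $\{Je_1,\dots,Je_n\}$ and put $h^k_{ij}=\langle h(e_i,e_j),Je_k\rangle$. Since $M$ is Lagrangian, $h^k_{ij}$ is totally symmetric in $i,j,k$ and the tangent $2$-planes of $M$ are totally real, so the ambient holomorphic curvature enters the Gauss equation exactly as the curvature of a real space form of curvature $c$:
\begin{equation*}
R_{ijkl}=c(\delta_{ik}\delta_{jl}-\delta_{il}\delta_{jk})+\sum_m\bigl(h^m_{ik}h^m_{jl}-h^m_{il}h^m_{jk}\bigr).
\end{equation*}
Writing $h=\mathring{h}+\tfrac1n\,g\otimes\vec{H}$ with $\vec{H}$ the mean curvature vector and $n^2H^2=|\vec{H}|^2$, one has $S=|\mathring{h}|^2+nH^2$, so \eqref{pinching} is equivalent to the traceless bound
\begin{equation*}
|\mathring{h}|^2\le\frac{n(4n-3)}{2n+3}\,H^2+2c .
\end{equation*}
This is the only consequence of \eqref{pinching} that will be used; since $\tfrac{n(4n-3)}{2n+3}>\tfrac{n}{n-1}$ for $n\ge3$, it is strictly weaker than the bound underlying Theorem \ref{thm1.02}, so the total symmetry of $h^k_{ij}$ must be exploited in an essential way.

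For the main step, fix $p\in M$, an orthonormal four-frame in $T_{(p,0)}(M\times\mathbb{R}^2)$ and $\lambda,\mu\in[-1,1]$. When the four vectors $e_1,e_2,e_3,e_4$ are tangent to $M$, substituting the Gauss equation into the isotropic-curvature expression of Theorem \ref{Lemma2.2} and completing squares---the total symmetry of $h^m_{ij}$ being exactly what makes the cross terms recombine---gives
\begin{align*}
&R_{1313}+\lambda^2R_{1414}+\mu^2R_{2323}+\lambda^2\mu^2R_{2424}-2\lambda\mu R_{1234}\\
&\qquad=c(1+\lambda^2)(1+\mu^2)+\sum_m\Bigl[(h^m_{11}+\mu^2h^m_{22})(h^m_{33}+\lambda^2h^m_{44})-(h^m_{13}+\lambda\mu h^m_{24})^2-(\lambda h^m_{14}-\mu h^m_{23})^2\Bigr],
\end{align*}
and the remaining cases (some $e_a$ in the $\mathbb{R}^2$ summand) only drop terms. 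Since $c\ge0$, it suffices to bound the $m$-sum below by $-c$. The idea is to re-express $h^m_{11}+\mu^2h^m_{22}$ and $h^m_{33}+\lambda^2h^m_{44}$ through the traces $H_m=\sum_ih^m_{ii}$ and the components of $\mathring{h}$, and then to absorb the (possibly negative) diagonal product together with the two squared terms by Cauchy--Schwarz estimates that respect the symmetry of $h^m_{ij}$, so that a component like $h^m_{13}$ is counted jointly with $h^1_{3m}=h^3_{1m}$. With the right grouping of squares the displayed traceless bound is precisely what is needed to make the quantity above $\ge0$ for every such frame and all $\lambda,\mu\in[-1,1]$; hence $M\times\mathbb{R}^2$ has nonnegative isotropic curvature. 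I expect this algebraic step---finding the grouping for which the symmetric-tensor constant $\tfrac{3}{n+3/2}$, rather than the general-submanifold constant $\tfrac{1}{n-1}$, suffices---to be the principal obstacle.

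For $n\ge4$, $M$ is now a compact manifold with $M\times\mathbb{R}^2$ of nonnegative isotropic curvature, so Theorem \ref{Lemma2.2} applies as soon as $M$ is known to be locally irreducible. One must rule out local reducibility together with alternatives (ii) and (iii): in each of these cases the extra structure---a local flat de Rham factor, a parallel K\"ahler structure biholomorphic to $\mathbb{CP}^m$, or a parallel curvature tensor on a higher-rank symmetric space---forces, through the Gauss and Codazzi equations, $|\mathring{h}|^2$ above $\tfrac{n(4n-3)}{2n+3}H^2+2c$ unless $h\equiv0$; and a compact totally geodesic Lagrangian submanifold exists only when $c>0$, where it is a real projective space, hence already a spherical space form. (For example, the Clifford torus in $\mathbb{CP}^n$ is minimal with $S=n(n-1)c$, so it violates \eqref{pinching} once $n\ge3$.) Thus only alternative (i) survives, so $M$ is diffeomorphic to a spherical space form, and to $\mathbb{S}^n$ when $M$ is simply connected.

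For $n=3$, the same Cauchy--Schwarz argument applied to the sectional curvature of a $2$-plane in $T_pM$ shows that $M$ has nonnegative sectional curvature; ruling out local reducibility as before, Hamilton's theorem---together with the strong maximum principle along the Ricci flow, which handles the case in which the Ricci curvature acquires a zero eigenvalue---shows that $M$ is again diffeomorphic to a spherical space form, completing the strategy.
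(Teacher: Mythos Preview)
Your overall architecture---establish that $M\times\mathbb{R}^2$ has nonnegative isotropic curvature and then invoke Brendle--Schoen (Theorem \ref{Lemma2.2}) for $n\ge4$ and Hamilton for $n=3$---matches the paper's. But two genuine gaps remain, and the second is not just a matter of filling in details.

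First, the algebraic step. You label it the principal obstacle, and it is: the paper's computation (its key Lemma) is considerably more than ``completing squares with the right grouping.'' It first derives a sharp lower bound for a \emph{single} sectional curvature $R_{1212}$ by reorganizing the Gauss equation into explicit sums of squares (using identities of the BDFV type), and in doing so records exactly when equality holds---the second fundamental form is forced into a rigid shape which, when imposed simultaneously for all directions, collapses to $h\equiv 0$ at that point. This equality analysis, absent from your sketch, is not an afterthought; it is what drives the rest of the proof.

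Second, and more seriously, your route for eliminating local reducibility and alternatives (ii)--(iii) of Theorem \ref{Lemma2.2} is not what the paper does, and as written is not convincing. You propose to argue case by case via Gauss--Codazzi that each alternative forces $|\mathring h|^2$ above the bound; no such argument is given, and for (iii) in particular it is unclear how one would begin. The paper instead uses the equality analysis above to show that $M$ itself has \emph{positive} isotropic curvature (strictly so when $c>0$; when $c=0$ one gets quasi-positivity since a compact Lagrangian in $\mathbb{C}^n$ cannot be totally geodesic, and then Aubin's and Seshadri's theorems upgrade Ricci and isotropic curvature, respectively, to strictly positive metrics). Positive Ricci plus Myers gives compactness of the universal cover $\tilde M$; Micallef--Moore then makes $\tilde M$ homeomorphic to $S^n$, which simultaneously forces local irreducibility and reduces any locally symmetric possibility to the round sphere; and Micallef--Wang's $b_2=0$ rules out the K\"ahler alternative. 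None of Myers, Aubin, Seshadri, Micallef--Moore, or Micallef--Wang appears in your outline, yet they are precisely how the paper closes the argument. The $n=3$ case is handled the same way: the sectional-curvature bound gives strictly positive Ricci (quasi-positive when $c=0$, then Aubin), after which Hamilton applies directly---there is no appeal to the strong maximum principle along the flow.
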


\begin{rem}
When $c=0$, I. Castro (see \cite{C}) constructed a one-parameter
family of Lagrangian spheres including the Whitney spheres, defined
by
$$
\Phi_q(x_1,\ldots,x_n,x_{n+1})=\frac{2^{1/q}e^{i\beta_q(x_{n+1})}}{[(1+x_{n+1})^q+(1-x_{n+1})^q]^{1/q}}(x_1,\ldots,x_n),~q>1,
$$
with
$$
\beta_q(x_{n+1})=\frac{2}{q}\arctan\Big(\frac{(1+x_{n+1})^{q/2}-(1-x_{n+1})^{q/2}}{(1+x_{n+1})^{q/2}+(1-x_{n+1})^{q/2}}\Big),
$$
each $\Phi_q$ satisfies that
$S=\frac{(3n+q^2+2q-2)n^2H^2}{(n+q)^2}$.
We note that if
$$
1<q\le 2+\frac{3+\sqrt{3(2n^2+n-3)}}{2n-3},$$
then $\Phi_q$ satisfies our condition \eqref{pinching}, and we also note that $\Phi_2$ is Whitney sphere.
\end{rem}
\section{Preliminaries}
In this section, $M$ will always denote an n-dimensional
Lagrangian submanifold of $\bar{M}^n(4c)$ which is an n-dimensional
complex space form with constant holomorphic sectional curvature
$4c$. We denote the Levi-Civita connections on $M$,
$\bar{M}^n(4c)$ and the normal bundle by $\nabla$, $D$ and
$\nabla_X^{\bot}$, respectively. The formulas of Gauss and Weingarten
are given by (see \cite{CLU},\cite{DLVW},\cite{LV},\cite{LW})
\begin{equation}
D_XY=\nabla_XY+h(X,Y),
~D_X\xi=-A_{\xi}X+\nabla_X^{\bot}\xi,\label{21}
\end{equation}
where $X$ and $Y$ are tangent vector fields and $\xi$ is a normal
vector field on $M$.

The Lagrangian condition implies that

\begin{equation}\nabla_X^{\bot}JY=J\nabla_XY, ~A_{JX}Y=-J h(X,Y)=A_{JY}X,\label{22}\end{equation}
where $h$ is the second fundamental form and $A$ denotes the shape
operator.

The above formulas immediately imply that $\langle h(X,Y),JZ\rangle$ is totally symmetric, i.e.,
\begin{equation}\langle h(X,Y),JZ\rangle ~=~\langle h(X,Z),JY\rangle ,\label{29}\end{equation}
for tangent vector fields $X$, $Y$ and $Z$.

For a Lagrangian submanifold $M$ in $\bar{M}^n(4c)$, an orthonormal frame field
$$e_1,\ldots,e_n,e_{1^*},\ldots,e_{n^*}$$
is called an adapted Lagrangian frame field if $e_1,\ldots,e_n$ are orthonormal tangent vector fileds and $e_{1^*},\ldots,e_{n^*}$
are normal fields given by
\begin{equation}
e_{1^*}=Je_1,\ldots,e_{n^*}=Je_n.
\end{equation}
Their dual frame fields are $\theta_1,\ldots,\theta_n$, the Levi-Civita connection forms and normal connection forms are $\theta_{ij}$ and
$\theta_{i^*j^*}$, respectively.

Writing $h(e_i,e_j)=\sum\limits_{k=1}^nh_{ij}^{k^*}e_{k^*}$,
\eqref{29} is equivalent to
\begin{equation}\label{symm}
h_{ij}^{k^*}=h_{kj}^{i^*}=h_{ik}^{j^*},1\le i,j,k\le n.
\end{equation}

The norm square of the second fundamental form is
$S=\sum\limits_{i,j,k}(h_{ij}^{k^*})^2$. The mean curvature vector
$\vec{H}$ is defined by
$\vec{H}=\frac{1}{n}\sum\limits_{i,k}h_{ii}^{k^*}e_{k^*}$ and the
mean curvature $H=|\vec{H}|$.

 If we denote the components of
curvature tensor of $\nabla$  by $R_{ijkl}$, then the equations of Gauss
 are given by (see \cite{CLU},\cite{DLVW},\cite{LV},\cite{LW})
\begin{equation}\begin{aligned}R_{ijkl}=c(\delta_{ik}\delta_{jl}-\delta_{il}\delta_{jk})
+\sum_{r=1}^n(h_{ik}^{r^*}h_{jl}^{r^*}-h_{il}^{r^*}h_{jk}^{r^*}).\label{24}\end{aligned}\end{equation}

\section{Some Lemmas and the proof of Theorem \ref{main}}

We need the following lemmas to finish the proof of Theorem
\ref{main}.

In view of a result of Aubin, we have the following lemma:
\begin{lem}\label{Aubin}(see Aubin, \cite{Aubin})
Let $M$ be a compact n-dimensional Riemannian
manifold. If
M has nonnegative Ricci curvature everywhere and has positive Ricci curvature at some point,
then M admits a metric with positive Ricci curvature everywhere.
\end{lem}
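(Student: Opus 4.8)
This is a classical metric-deformation theorem of Aubin, so the most economical course is to invoke \cite{Aubin} directly; what follows is only a sketch of the argument one would reproduce. Set $Z=\{x\in M:\mathrm{Ric}_{g_0}(x)\ \text{is not positive definite}\}$; by hypothesis $Z$ is closed and proper, so its complement is a nonempty open subset of $M$. The aim is to enlarge this open set to all of $M$ by a finite sequence of localized modifications of $g_0$, each of which strictly improves the Ricci curvature near a prescribed point of $Z$ while never destroying the inequality $\mathrm{Ric}\ge 0$ elsewhere. Since $M$ is compact, finitely many such moves --- or, equivalently, a single global deformation assembled from localized ones via a partition of unity subordinate to a finite cover of $M$ --- will do.

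The core is a local statement. Fix a point $p$ with $\mathrm{Ric}_{g_0}(p)$ positive definite; by continuity $\mathrm{Ric}_{g_0}$ stays positive on a geodesic ball $B$ about $p$. Given a target point $q\in Z$, one modifies $g_0$ only inside a tube $U$ joining $B$ to $q$. Writing the candidate metric as $g_0+h$ with $h$ a symmetric $2$-tensor compactly supported in $U$ and working in geodesic normal coordinates, where $g_{ij}=\delta_{ij}-\tfrac13 R_{ikjl}x^kx^l+O(|x|^3)$, the change in Ricci curvature is governed to leading order by the linearized Ricci operator $D\mathrm{Ric}_{g_0}(h)$. One chooses $h$ --- guided by this explicit formula and the differential Bianchi identities --- so that: (a) at every point where $\mathrm{Ric}_{g_0}$ already has a nontrivial kernel, the perturbation does not push any Ricci direction negative, hence $\mathrm{Ric}_{g_0+h}\ge 0$ globally; and (b) $\mathrm{Ric}_{g_0+h}>0$ on an open set strictly larger than $M\setminus Z$, now containing $q$. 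Iterating over finitely many points $q$ whose chosen positivity-neighborhoods cover $Z$ (or carrying out all the modifications at once with a partition of unity) produces a metric on $M$ with $\mathrm{Ric}>0$ everywhere, which is the assertion of the lemma.

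I expect the genuinely delicate point to be the construction of the localized perturbation $h$: the differential Bianchi identities constrain how freely one may prescribe Ricci curvature, so a deformation that strictly raises it on one region tends to lower it nearby, and $h$ must be chosen precisely enough that strict nonnegativity survives everywhere --- this balancing of gains against losses is exactly where the explicit estimates of \cite{Aubin} do the work. I would also record a shortcut valid when $n\le 3$: run the Ricci flow $\partial_t g=-2\,\mathrm{Ric}$ with $g(0)=g_0$ (short-time existence holds since $M$ is compact), use that $\mathrm{Ric}\ge 0$ is preserved in these dimensions, and apply Hamilton's strong maximum principle to conclude that for small $t>0$ the kernel of $\mathrm{Ric}_{g(t)}$ is a parallel distribution of constant rank; since $g(t)\to g_0$ smoothly and $\mathrm{Ric}_{g_0}$ is positive definite somewhere, that rank must be $0$, i.e. $\mathrm{Ric}_{g(t)}>0$ everywhere.
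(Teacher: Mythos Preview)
The paper does not prove this lemma at all: it is stated with the attribution ``(see Aubin, \cite{Aubin})'' and used as a black box in the proof of Theorem~\ref{main}. Your opening sentence --- invoke \cite{Aubin} directly --- is therefore exactly what the paper does, and everything after that is supplementary exposition not present in the paper.
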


The following convergence result for Ricci flow in 3-dimension due
to Hamilton is very important in our proof of Theorem \ref{main} for
$n=3$.

\begin{lem}\label{Hamilton}(see Hamilton, \cite{Hamilton})
Let $M$ be a compact 3-manifold which admits a Riemannian metric with strictly positive Ricci curvature.
Then $M$ also admits a metric of constant positive curvature.
\end{lem}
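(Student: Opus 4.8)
The plan is to deform the given metric $g_0$, which by hypothesis has $\mathrm{Ric}(g_0)>0$, by Hamilton's Ricci flow
\[
\frac{\partial}{\partial t} g_{ij} = -2R_{ij}, \qquad g(0)=g_0,
\]
and to prove that the volume-normalized flow converges smoothly to a metric of constant positive sectional curvature; on a compact $3$-manifold such a metric is precisely one of constant positive curvature, giving the conclusion. First I would establish short-time existence and uniqueness on $M$. Because the Ricci flow is only weakly parabolic (its symbol degenerates along the orbits of the diffeomorphism group), I would apply DeTurck's trick to replace it by a strictly parabolic system with the same solutions up to diffeomorphism, and thereby obtain a maximal solution $g(t)$ on an interval $[0,T)$.

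Next I would derive the evolution equations for the curvature under the flow. The scalar curvature obeys $\partial_t R = \Delta R + 2|\mathrm{Ric}|^2$, and the Ricci tensor satisfies a reaction--diffusion equation $\partial_t R_{ij} = \Delta R_{ij} + (\text{quadratic in curvature})$. The decisive simplification in dimension three is that the Weyl tensor vanishes, so the full Riemann tensor is algebraically determined by the Ricci tensor; passing to Uhlenbeck's evolving orthonormal frame, the curvature operator $\mathcal{R}$ satisfies $\partial_t \mathcal{R} = \Delta \mathcal{R} + \mathcal{R}^2 + \mathcal{R}^{\#}$, whose reaction term, after diagonalization with eigenvalues $\lambda\ge\mu\ge\nu$, reduces to the autonomous ODE system
\[
\dot\lambda = \lambda^2+\mu\nu,\qquad \dot\mu=\mu^2+\lambda\nu,\qquad \dot\nu=\nu^2+\lambda\mu.
\]

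I would then invoke Hamilton's maximum principle for reaction--diffusion systems of symmetric $2$-tensors: a closed convex set of curvature operators that is preserved by the reaction ODE and invariant under parallel transport is preserved by the full PDE. From this I would deduce (a) that positivity of Ricci curvature is preserved, so the smallest Ricci eigenvalue $\mu+\nu>0$ persists, and (b) that the initial pinching $R_{ij}\ge \varepsilon R\,g_{ij}$ is preserved for some $\varepsilon>0$. The heart of the argument, and the step I expect to be the main obstacle, is the \emph{improving} pinching estimate: one must produce $\delta>0$ and $C$ with
\[
|\mathring{\mathrm{Ric}}|^2 \le C\,R^{\,2-\delta},
\]
where $\mathring{\mathrm{Ric}}$ is the trace-free Ricci tensor. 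At the level of the ODE this expresses that the eigenvalue gap $\lambda-\nu$ grows more slowly than $R=\lambda+\mu+\nu$, but upgrading it to a pointwise PDE estimate requires taming the gradient terms, which I would control by combining a maximum-principle argument on the pinching quantity with a gradient estimate for $R$ derived from the contracted second Bianchi identity together with a further interpolation.

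Finally I would show the flow develops a singularity in finite time: since $|\mathrm{Ric}|^2\ge \tfrac13 R^2$ in dimension three, the minimum of $R$ obeys $\tfrac{d}{dt}R_{\min}\ge \tfrac23 R_{\min}^2$, forcing $R_{\max}\to\infty$ as $t\to T<\infty$. Rescaling the metrics to fixed volume, the improving pinching estimate then forces $|\mathring{\mathrm{Ric}}|/R\to 0$, so any limit is Einstein and hence, in dimension three, of constant positive curvature. To extract an actual smooth limiting metric I would invoke Shi-type derivative estimates for the curvature, which give uniform bounds on all covariant derivatives after rescaling, and conclude exponential convergence of the normalized flow to a round metric. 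This metric of constant positive sectional curvature on $M$ is the desired conclusion.
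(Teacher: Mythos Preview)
The paper does not supply its own proof of this lemma; it is stated as a quoted result with a reference to Hamilton's original paper \cite{Hamilton} and is used as a black box in the proof of Theorem~\ref{main}. There is therefore no ``paper's proof'' to compare against.

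That said, your outline is a faithful sketch of Hamilton's original argument: short-time existence via the DeTurck trick, the evolution equation for the curvature operator in dimension three (where the Weyl tensor vanishes), Hamilton's tensor maximum principle to preserve the cone $\mathrm{Ric}>0$ and an initial pinching $R_{ij}\ge \varepsilon R g_{ij}$, the key improving-pinching estimate $|\mathring{\mathrm{Ric}}|^2\le C R^{2-\delta}$, the finite-time blowup of $R$, and convergence of the normalized flow via Shi-type estimates to an Einstein, hence constant-curvature, metric. This is exactly the content of the cited reference, so your proposal is correct in substance and in approach; for the purposes of this paper, however, a one-line citation suffices.
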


A Riemannian manifold $M$ is said to have \textit{nonnegative
(positive, respectively) isotropic curvature}, if
$$R_{1313}+R_{1414}
+ R_{2323} + R_{2424}- 2 R_{1234}\geq0 (>0, \text{ respectively})$$
for all orthonormal four-frames $\{e_1,e_2,e_3,e_4\}$. This notation
was firstly introduced by Micallef and Moore, where they proved the
following sphere theorem.
\begin{lem}\label{Micallef}(see Micallef and Moore, \cite{Micallef})
Let $M$ be a compact simply connected n-dimensional Riemannian manifolds which has positive isotropic curvature, where $n\ge 4$.
Then $M$ is homeomorphic to a sphere.
\end{lem}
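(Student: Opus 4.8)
The plan is to connect the curvature hypothesis to the topology of $M$ through the variational theory of harmonic (equivalently, branched minimal) two-spheres, and then to invoke the generalized Poincaré conjecture. The argument rests on two analytic inputs and one topological conclusion. First I would show that positive isotropic curvature forces every nonconstant harmonic map $f\colon S^2\to M$ to have large Morse index; second, I would run a minimax/Morse-theoretic argument detecting such harmonic spheres of \emph{controlled} index whenever some homotopy group of $M$ is nontrivial. Combining these kills $\pi_k(M)$ in the range $2\le k\le\lfloor n/2\rfloor$, after which Poincaré duality together with the theorems of Smale and Freedman upgrades ``homotopy sphere'' to ``homeomorphic to $S^n$''.

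For the index estimate, which I expect to be one of the two technical hearts, I would complexify. A nonconstant harmonic map $f$ is weakly conformal, hence a branched minimal immersion, and the pullback bundle $E=f^*TM\otimes\mathbb{C}$ over $S^2=\mathbb{CP}^1$ carries a natural holomorphic structure induced by the $(0,1)$-part of the pulled-back connection. By Grothendieck's theorem $E$ splits as a sum of holomorphic line bundles, and since $\partial f/\partial z$ is a nontrivial holomorphic section the degrees cannot all be negative; Riemann--Roch then guarantees a space $H^0(E)$ of holomorphic sections of complex dimension at least $n$. For a holomorphic section $s$ the $\bar\partial$-term in the complexified second variation (index form) of the energy drops out, leaving only a curvature integral, and pointwise such a section is an isotropic vector, so the relevant curvature term is exactly the isotropic curvature. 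Positivity of the isotropic curvature makes the complexified index form negative on an isotropic subspace of $H^0(E)$; counting dimensions produces a real space of variation fields of dimension at least $\tfrac{n-2}{2}$ on which the ordinary second variation is negative, giving $\mathrm{index}(f)\ge\tfrac{n-2}{2}$.

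For the existence/detection step I would set up Morse theory for the energy on maps $S^2\to M$. The obstruction is that the energy on $W^{1,2}(S^2,M)$ is conformally invariant and \emph{fails} the Palais--Smale condition because of bubbling, so I would follow Sacks--Uhlenbeck and work with the perturbed energy $E_\alpha$ (integrand $(1+|df|^2)^\alpha$), which satisfies Palais--Smale for $\alpha>1$. Running minimax over a nontrivial class in $\pi_k$ produces, for each $\alpha>1$, a critical point of $E_\alpha$ of Morse index $\le k-2$; letting $\alpha\to 1$ and analyzing the bubbling limit yields a nonconstant harmonic two-sphere whose index is still bounded by $k-2$. This passage to the limit, ensuring no energy is lost in the necks and that the index bound survives blow-up, is the main obstacle of the whole argument and requires the careful $\alpha$-energy compactness analysis.

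Finally I would assemble the pieces. If $\pi_k(M)\neq 0$ for some $k\ge 2$, the two steps yield a nonconstant harmonic sphere with $\tfrac{n-2}{2}\le\mathrm{index}\le k-2$, forcing $k\ge\tfrac{n+2}{2}$; hence $\pi_k(M)=0$ for all $2\le k\le\lfloor n/2\rfloor$. Together with $\pi_1(M)=0$ and the Hurewicz theorem this gives $H_k(M)=0$ for $1\le k\le\lfloor n/2\rfloor$, and Poincaré duality (available since a simply connected manifold is orientable) propagates the vanishing to $\lceil n/2\rceil\le k\le n-1$, so $M$ is a homology sphere and therefore a homotopy sphere. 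For $n\ge 5$ the generalized Poincaré conjecture (Smale) and for $n=4$ Freedman's theorem then imply that $M$ is homeomorphic to $S^n$.
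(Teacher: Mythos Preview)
The paper does not prove this lemma; it is quoted from Micallef and Moore's 1988 paper and used as a black box in the proof of Theorem~\ref{main}. Your outline is a faithful reconstruction of the original Micallef--Moore argument: the complexified second-variation index estimate via holomorphic sections of $E=f^*TM\otimes\mathbb{C}$, the Sacks--Uhlenbeck $\alpha$-energy minimax producing a harmonic sphere of index at most $k-2$ from a nontrivial $\pi_k$, and the assembly via Hurewicz, Poincar\'e duality, and Smale/Freedman are exactly the ingredients of their proof, and you correctly flag the survival of the index bound through bubbling as the delicate point.

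One imprecision worth correcting: a general holomorphic section of $E$ is \emph{not} automatically isotropic, contrary to what your sentence ``pointwise such a section is an isotropic vector'' suggests. The complex bilinear extension of the metric gives a holomorphic quadratic form on $E$, and one must restrict to holomorphic sections $s$ with $(s,s)=0$ and $(s,\partial f/\partial z)=0$; it is a dimension count on \emph{that} subvariety (using the Grothendieck splitting and the fact that $\partial f/\partial z$ is a nontrivial holomorphic section) which produces a real space of negative variations of dimension at least $\lfloor (n-2)/2\rfloor$. With this adjustment your sketch is correct.
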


In \cite{Micallef2}, Micallef and Wang  proved that

\begin{lem}\label{Micallef2}(see Micallef and Wang, \cite{Micallef2})
Let $M$ be a closed even-dimensional Riemannian manifold.
If $M$ has positive isotropic curvature, then $b_2(M)=0$.
\end{lem}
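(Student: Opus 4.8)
The plan is to deduce the vanishing of $b_2(M)$ from a Bochner--Weitzenb\"ock argument applied to harmonic $2$-forms, isolating the hypothesis through a pointwise curvature identity. By Hodge theory on the compact manifold $M$ one has $b_2(M)=\dim\mathcal H^2(M)$, where $\mathcal H^2(M)$ denotes the space of harmonic $2$-forms, so it is enough to show that every harmonic $2$-form $\omega$ vanishes identically. For such an $\omega$ the Weitzenb\"ock decomposition gives $0=\Delta\omega=\nabla^*\nabla\omega+\mathcal W(\omega)$; pairing with $\omega$ and integrating over the closed manifold $M$ yields
\[
\int_M\big(|\nabla\omega|^2+\langle\mathcal W(\omega),\omega\rangle\big)\,dV=0 .
\]
Thus everything reduces to the pointwise claim that the Weitzenb\"ock curvature term $\langle\mathcal W(\omega),\omega\rangle$ is \emph{strictly positive} wherever $\omega\neq0$, and positive isotropic curvature is precisely what I will exploit to obtain this.

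The heart of the argument is a pointwise computation at a fixed point $p$. I would first choose an orthonormal frame $e_1,\dots,e_{2m}$ (with $n=2m$) bringing the skew form $\omega$ into canonical block form
\[
\omega=\sum_{a=1}^m\lambda_a\,e^{2a-1}\wedge e^{2a},
\]
and abbreviate the two directions of the $a$-th block by $a'=2a-1$, $a''=2a$. Substituting into the curvature term, which in an orthonormal frame reads
\[
\langle\mathcal W(\omega),\omega\rangle=\sum_{i,j,k}\mathrm{Ric}_{ij}\,\omega_{ik}\omega_{jk}-\tfrac12\sum_{i,j,k,l}R_{ijkl}\,\omega_{ij}\omega_{kl},
\]
and using that $\omega_{ij}$ is supported on block pairs, the Ricci part collapses to $\sum_a\lambda_a^2(\mathrm{Ric}_{a'a'}+\mathrm{Ric}_{a''a''})$ and the curvature part to $-2\sum_{a,b}\lambda_a\lambda_b R_{a'a''b'b''}$. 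A short bookkeeping then shows that the intrinsic block sectional curvatures $R_{a'a''a'a''}$ cancel between the two parts, leaving a pure cross-block expression
\[
\langle\mathcal W(\omega),\omega\rangle=\sum_{a<b}\Big[(\lambda_a^2+\lambda_b^2)\,\Sigma_{ab}-4\lambda_a\lambda_b\,R_{a'a''b'b''}\Big],
\]
where $\Sigma_{ab}=R_{a'b'a'b'}+R_{a'b''a'b''}+R_{a''b'a''b'}+R_{a''b''a''b''}$ collects the four mixed sectional curvatures between blocks $a$ and $b$.

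The final step is to recognize this as a manifestly positive combination of isotropic curvatures, which is where the hypothesis enters. For each pair $a<b$ I would consider the two complex $2$-planes spanned respectively by $\{e_{a'}+ie_{a''},\,e_{b'}+ie_{b''}\}$ and $\{e_{a'}+ie_{a''},\,e_{b'}-ie_{b''}\}$; a direct check shows both are isotropic for the complexified metric, so their isotropic curvatures $K^+_{ab},K^-_{ab}$ are positive by hypothesis. Expanding these curvatures gives $\Sigma_{ab}=\tfrac12(K^+_{ab}+K^-_{ab})$ and $R_{a'a''b'b''}=\tfrac14(K^-_{ab}-K^+_{ab})$, whereupon each bracket becomes a perfect-square combination
\[
(\lambda_a^2+\lambda_b^2)\Sigma_{ab}-4\lambda_a\lambda_b R_{a'a''b'b''}=\tfrac12K^+_{ab}(\lambda_a+\lambda_b)^2+\tfrac12K^-_{ab}(\lambda_a-\lambda_b)^2 .
\]
Since $K^\pm_{ab}>0$, each summand is nonnegative and vanishes only when $\lambda_a=\lambda_b=0$; because $n=2m$ with $m\geq2$ (the isotropic-curvature condition being nonvacuous only for $n\geq4$), any nonzero $\omega$ leaves at least one strictly positive term, so $\langle\mathcal W(\omega),\omega\rangle>0$ at $p$. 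Integrating over $M$ then contradicts the displayed identity unless $\omega\equiv0$, giving $b_2(M)=0$.

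I expect the main obstacle to be the algebraic core of the last two steps: correctly expanding the Weitzenb\"ock term in the block frame, verifying the cancellation of the intrinsic block curvatures, and, above all, matching the surviving cross-terms to the right pair of isotropic $2$-planes so that the hypothesis can be applied. A reassuring consistency check for the final identity is $\mathbb{CP}^m$, whose K\"ahler form is harmonic with all $\lambda_a$ equal: then only the $K^+_{ab}(\lambda_a+\lambda_b)^2$ terms survive, and these vanish because the ``holomorphic'' planes $\{e_{a'}+ie_{a''},e_{b'}+ie_{b''}\}$ have zero isotropic curvature on $\mathbb{CP}^m$---consistent with $\mathbb{CP}^m$ carrying only nonnegative (not positive) isotropic curvature and with $b_2(\mathbb{CP}^m)=1$.
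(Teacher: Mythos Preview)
Your argument is correct and is precisely the original Bochner--Weitzenb\"ock proof of Micallef and Wang. The paper itself does not supply a proof of this lemma; it merely quotes the result with a citation to \cite{Micallef2}, so there is nothing to compare against beyond noting that your proposal reproduces the cited source's argument.
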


Later, H. Seshadri proved that the study of compact manifolds with
nonnegative isotropic curvature reduces to the study of manifolds
with positive isotropic curvature. In view of H. Seshadri's result,
we have
\begin{lem}\label{Lemma2.3}(see H. Seshadri, \cite{Harish})Let $M$ be a compact n-dimensional Riemannian
manifold. If M has nonnegative isotropic curvature everywhere and
has positive isotropic curvature at some point, then M admits a
metric with positive isotropic curvature everywhere.\end{lem}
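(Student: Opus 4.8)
The plan is to run the Ricci flow and to upgrade nonnegativity to positivity by means of the strong maximum principle, following the scheme of Brendle--Schoen. Let $g(t)$, $t\in[0,T)$, denote the solution of the Ricci flow $\partial_t g=-2\,\mathrm{Ric}$ with $g(0)=g_0$; short-time existence on the compact manifold $M$ is standard. The first ingredient I would invoke is that the set of curvature operators with nonnegative isotropic curvature is a closed, convex, $O(n)$-invariant cone which is preserved by the Ricci flow. This preservation was established by Brendle--Schoen (and independently by Nguyen) by applying Hamilton's maximum principle for systems once one checks that the reaction term $Q(R)$ in the evolution equation $\partial_t R=\Delta R+Q(R)$ leaves this cone invariant (see \cite{Brendle2}, \cite{Brendle3}). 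Consequently $g(t)$ has nonnegative isotropic curvature for every $t\in[0,T)$.

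The decisive ingredient is then the strong maximum principle applied to the degenerate set of isotropic four-frames along which the isotropic curvature of $R(\cdot,t)$ vanishes. Using the version of Hamilton's tensorial strong maximum principle adapted to the isotropic curvature cone, I would extract the following dichotomy for small $t\in(0,T)$: either (i) $R(\cdot,t)$ lies in the interior of the cone at every point, i.e. $g(t)$ has positive isotropic curvature everywhere, in which case $g(t)$ is the metric we seek; or (ii) the null four-frames of the isotropic curvature constitute a nontrivial parallel structure, forcing a reduction of the restricted holonomy group of $(M,g(t))$.

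In the degenerate alternative (ii), I would combine Berger's holonomy classification with the constraints that nonnegative isotropic curvature imposes on each holonomy type to conclude that the universal cover of $(M,g(t))$ is isometric to a nontrivial Riemannian product, or to a K\"ahler manifold, or to a locally symmetric space (this is exactly the structure underlying Theorem \ref{Lemma2.2}). Each of these fails to have positive isotropic curvature at \emph{every} point: a nontrivial product admits a null four-frame obtained by taking $e_1,e_2$ tangent to one factor and $e_3,e_4$ tangent to the other, a K\"ahler manifold of complex dimension at least two likewise admits a null four-frame built from the complex structure (consistent with Lemma \ref{Micallef2}), and the symmetric cases are excluded by the curvature identities of Brendle--Schoen. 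On the other hand, positivity of isotropic curvature is an open condition and $g(t)\to g_0$ smoothly as $t\to0^{+}$, so the hypothesis that $g_0$ has positive isotropic curvature at $p_0$ persists: for all sufficiently small $t>0$, $(M,g(t))$ still has positive isotropic curvature at $p_0$. This contradicts the pointwise degeneracy forced by alternative (ii). Hence (ii) is impossible for small $t>0$, leaving only alternative (i), which yields a metric of positive isotropic curvature everywhere on $M$.

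The main obstacle is the rigorous passage through the strong maximum principle in alternative (ii): one must show that the vanishing of the isotropic curvature along some four-frame, propagated by the heat-type evolution of $R$, genuinely produces a parallel distribution and thereby a holonomy reduction, and then identify the resulting reduced-holonomy structure precisely. This is where the refined maximum principle of Brendle--Schoen and the Berger classification do the heavy lifting. By contrast, the remaining steps---short-time existence, invariance of the nonnegative isotropic curvature cone, the openness of the positivity condition, and the elementary verification that products and K\"ahler metrics carry null four-frames---are comparatively routine.
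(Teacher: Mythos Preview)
The paper does not give a proof of this lemma at all: it is quoted verbatim as a known result of Seshadri \cite{Harish} and used as a black box in the proof of Theorem~\ref{main}. There is therefore nothing in the paper to compare your proposal against.

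For what it is worth, your sketch is a faithful outline of the strategy in Seshadri's own paper: one runs the Ricci flow, invokes the Brendle--Schoen/Nguyen invariance of the nonnegative isotropic curvature cone, and then applies a strong maximum principle to the evolving curvature operator to force the dichotomy you describe for $t>0$. The persistence of positivity at the initial point under small perturbations then rules out the rigid alternative. As you correctly identify, the delicate step is the rigorous passage from a null isotropic four-frame to a parallel structure and a holonomy reduction; this is handled in \cite{Harish} (building on Brendle--Schoen) and is not reproduced in the present paper.
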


In order to use the convergence results for the Ricci flow by
Brendle and Schoen (see Theorem \ref{Lemma2.2}) to prove Theorem \ref{main}, we will first prove the following key lemma:
\begin{lem}\label{Lemma2.5}
Let $M$ be an n-dimensional ($n\ge 4$) Lagrangian submanifold in a
complex space form $\bar{M}^n(4c)$ with $c\ge 0$. Suppose that
$$S\le\frac{6n^2H^2}{2n+3}+2c,$$
 then $$R_{1313}+\lambda^2 R_{1414}
+\mu^2 R_{2323} + \lambda^2\mu^2R_{2424}- 2\lambda\mu
R_{1234}\geq0$$ for all orthonormal four-frames
$\{e_1,e_2,e_3,e_4\}$ and all $\lambda,\mu\in[-1,1]$, i.e.
$M\times\mathbb{R}^2$ has nonegative isotropic curvature.
\end{lem}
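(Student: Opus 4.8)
The plan is as follows. Fix a point $p\in M$, choose an orthonormal four-frame $\{e_1,e_2,e_3,e_4\}\subset T_pM$, and complete it to an adapted Lagrangian frame $e_1,\dots,e_n,e_{1^*},\dots,e_{n^*}$ at $p$; by \eqref{symm} the coefficients $h_{ij}^{k^*}=\langle h(e_i,e_j),e_{k^*}\rangle$ are then totally symmetric in $i,j,k$. Write $h_{ij}:=h(e_i,e_j)$ for the normal-valued second fundamental form, so $|h_{ij}|^2=\sum_k(h_{ij}^{k^*})^2$. Running $R_{1313},R_{1414},R_{2323},R_{2424}$ and $R_{1234}$ through the Gauss equation \eqref{24}, separating the terms proportional to $c$, and completing squares among the remaining ones, one obtains
\[
R_{1313}+\lambda^2R_{1414}+\mu^2R_{2323}+\lambda^2\mu^2R_{2424}-2\lambda\mu R_{1234}=c(1+\lambda^2)(1+\mu^2)+\mathcal H(\lambda,\mu),
\]
where $\mathcal H(\lambda,\mu):=\langle h_{11}+\mu^2h_{22},\,h_{33}+\lambda^2h_{44}\rangle-|h_{13}+\lambda\mu\,h_{24}|^2-|\lambda h_{14}-\mu h_{23}|^2$. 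Since $c\ge0$ the first summand is harmless, so the whole matter reduces to a lower bound for $\mathcal H(\lambda,\mu)$; moreover $\lambda,\mu$ affect the signs only through the product $\lambda\mu$ in $-2\lambda\mu R_{1234}$, so replacing $e_3$ or $e_4$ by its opposite if necessary we may assume $\lambda,\mu\in[0,1]$.

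The core of the proof is the purely algebraic estimate
\[
\mathcal H(\lambda,\mu)\;\ge\;-\,\frac{(1+\lambda^2)(1+\mu^2)}{2}\left(S-\frac{6n^2H^2}{2n+3}\right),
\]
to hold for every totally symmetric array $(h_{ij}^{k^*})$ and all $\lambda,\mu\in[0,1]$, with $S=\sum_{i,j,k}(h_{ij}^{k^*})^2$ and $n^2H^2=\sum_k(\sum_i h_{ii}^{k^*})^2$. I would establish it in four steps: (i) bound the subtracted squares by Cauchy--Schwarz, e.g.\ $|h_{13}+\lambda\mu h_{24}|^2\le(1+\lambda^2\mu^2)(|h_{13}|^2+|h_{24}|^2)$ and likewise for $|\lambda h_{14}-\mu h_{23}|^2$, so the subtracted part is controlled, with a weight $\le(1+\lambda^2)(1+\mu^2)$, by the ``off-diagonal'' part of $S$; (ii) bound the inner-product term from below via $2\langle A,B\rangle\ge-(t|A|^2+t^{-1}|B|^2)$ with $t$ tuned to $\lambda,\mu$, and then insert the trace identity $\sum_i h_{ii}=n\vec H$ — rewriting $h_{11}+\mu^2h_{22}$ and $h_{33}+\lambda^2h_{44}$ in terms of $n\vec H$ and the complementary sum $\sum_{i\ge5}h_{ii}$ is what produces the $n^2H^2$ term; (iii) — and here the Lagrangian hypothesis is decisive — exploit the total symmetry of $(h_{ij}^{k^*})$: the coefficients with a repeated index from $\{1,2,3,4\}$, such as $h_{11}^{3^*}=h_{13}^{1^*}$ and $h_{33}^{1^*}=h_{13}^{3^*}$, are \emph{shared} between the ``diagonal'' vectors $h_{ii}$ and the ``off-diagonal'' vectors $h_{ij}$, and accounting for this overlap is exactly the room a generic submanifold lacks — it is what upgrades the condition $S\le\frac{n^2H^2}{n-1}+2c$ sufficient in Theorem \ref{thm1.02} to the weaker $S\le\frac{3n^2H^2}{n+3/2}+2c$; (iv) apply Cauchy--Schwarz to the leftover sum over the bulk indices $k\ge5$ to pin down the constant $\frac{1}{2n+3}$.

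Granting the estimate, the hypothesis $S\le\frac{6n^2H^2}{2n+3}+2c$ gives $S-\frac{6n^2H^2}{2n+3}\le2c$, so $\mathcal H(\lambda,\mu)\ge-c(1+\lambda^2)(1+\mu^2)$, and therefore
\[
R_{1313}+\lambda^2R_{1414}+\mu^2R_{2323}+\lambda^2\mu^2R_{2424}-2\lambda\mu R_{1234}\ge c(1+\lambda^2)(1+\mu^2)-c(1+\lambda^2)(1+\mu^2)=0
\]
for all $\lambda,\mu\in[-1,1]$, i.e.\ $M\times\mathbb{R}^2$ has nonnegative isotropic curvature. The main obstacle is steps (iii)--(iv): one must keep exact track of which symmetric coefficients $h_{ij}^{k^*}$ occur in each summand of $\mathcal H$, of $S$, and of $n^2H^2$, and choose the Cauchy--Schwarz weights so that, once the estimates are distributed optimally between the four distinguished directions and the bulk, all coefficients line up and the sharp constant $\frac{6}{2n+3}=\frac{3}{n+3/2}$ (rather than something weaker) comes out. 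Carrying the $\lambda,\mu$ dependence through the single factor $(1+\lambda^2)(1+\mu^2)$ from the start should keep this bookkeeping under control.
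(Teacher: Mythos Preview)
Your setup is correct: the identity
\[
R_{1313}+\lambda^2R_{1414}+\mu^2R_{2323}+\lambda^2\mu^2R_{2424}-2\lambda\mu R_{1234}=c(1+\lambda^2)(1+\mu^2)+\mathcal H(\lambda,\mu)
\]
is right, and the target estimate $\mathcal H(\lambda,\mu)\ge-\tfrac{(1+\lambda^2)(1+\mu^2)}{2}\bigl(S-\tfrac{6n^2H^2}{2n+3}\bigr)$ is exactly what the paper proves. But what you have submitted is a plan, not a proof: the entire content of the lemma is that algebraic inequality, and you have only described in broad strokes how you would attack it. Steps (iii)--(iv), which you yourself flag as the obstacle, are where all the work lies, and nothing in your outline shows that the constants actually close up to $\tfrac{6}{2n+3}$ rather than something weaker.

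More seriously, the method you propose is too lossy to reach the sharp constant. Step~(i) replaces $|h_{13}+\lambda\mu h_{24}|^2$ by $(1+\lambda^2\mu^2)(|h_{13}|^2+|h_{24}|^2)$, discarding the cross structure; step~(ii) applies $2\langle A,B\rangle\ge-(t|A|^2+t^{-1}|B|^2)$ to the \emph{positive} inner-product term $\langle h_{11}+\mu^2h_{22},h_{33}+\lambda^2h_{44}\rangle$, which throws away exactly the contribution that must eventually produce the $n^2H^2$ gain --- you cannot recover it afterwards by ``inserting the trace identity'', since a single $h_{ii}$ is not expressible through $\vec H$. The paper does something quite different: it first proves, via an \emph{exact} sum-of-squares identity (not an inequality), that each sectional curvature satisfies
\[
R_{ijij}\ge\tfrac12\Bigl(\tfrac{6n^2H^2}{2n+3}+2c-S\Bigr)+\text{(explicit leftover squares)},
\]
the leftover squares being recorded precisely (terms like $(h_{12}^{j^*})^2$, $(h_{ij}^{1^*})^2$, $(h_{11}^{r^*}-h_{22}^{r^*})^2$, etc.). It then rewrites $-2\lambda\mu R_{1234}$ via the first Bianchi identity and the Lagrangian symmetry \eqref{symm} as a sum of products each of which is dominated, term by term, by the leftover squares coming from the four sectional curvatures $R_{1313},R_{1414},R_{2323},R_{2424}$. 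The sharp constant $\tfrac{6}{2n+3}$ arises from a specific quadratic decomposition (equations \eqref{3.02}--\eqref{3.022} in the paper) of the trace terms $II_1,II_2$, and there is no slack in it; a generic Cauchy--Schwarz argument with tuned weights will not reproduce this. If you want to pursue your route you would in effect have to rediscover that decomposition.
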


\begin{proof}
For any orthonormal four-frame $\{e_1,e_2,e_3,e_4\}$, we extend it
to be an orthonormal tangent vector field $\{e_1,\ldots,e_n\}$ and
we get an adapted Lagrangian frame field
$\{e_1,\ldots,e_n,e_{1^*}=Je_1,\ldots,e_{n^*}=Je_n\}$.

 We
denote that
\begin{equation}\label{Hr}
H_r=\frac{1}{n}\sum\limits_{j=1}^nh_{jj}^{r^*},~\forall~ 1\le r\le n.
\end{equation}
By Gauss equation \eqref{24} and \eqref{symm}, we have
\begin{equation}\label{3.0}
\begin{aligned}
R_{1212}=&c+\sum_{r=1}^{n}(h_{11}^{r^*}h_{22}^{r^*}-(h_{12}^{r^*})^2)\\
=&c+\frac{1}{2}(n^2H^2-S)-\frac{1}{2}\sum_{r=1}^n\Big[[\sum_{i=1}^n(h_{ii}^{r^*})^2+\sum_{i\ne j}h_{ii}^{r^*}h_{jj}^{r^*}]
-[\sum_{i=1}^n(h_{ii}^{r^*})^2+\sum_{i\ne j}(h_{ij}^{r^*})^2]\Big]\\
&+\sum_{r=1}^{n}(h_{11}^{r^*}h_{22}^{r^*}-(h_{12}^{r^*})^2)\\
=&c+\frac{1}{2}(n^2H^2-S)-\sum_{r=1}^{n}\Big[\sum_{1\le i<j\le n}(h_{ii}^{r^*}h_{jj}^{r^*}-(h_{ij}^{r^*})^2)-h_{11}^{r^*}h_{22}^{r^*}+(h_{12}^{r^*})^2\Big]\\
=&c+\frac{1}{2}(n^2H^2-S)\\
&-\sum_{r=1}^{n}
\Big[\sum_{j=3}^n(h_{11}^{r^*}+h_{22}^{r^*})h_{jj}^{r^*}+\sum_{3\le
i<j\le n}h_{ii}^{r^*}h_{jj}^{r^*}
-\sum_{j=3}^n(h_{1j}^{r^*})^2-\sum_{2\le i<j\le n}(h_{ij}^{r^*})^2\Big]\\
=&c+\frac{1}{2}(n^2H^2-S)-
\Big[\sum_{r=1}^{n}(\sum_{j=3}^n(h_{11}^{r^*}+h_{22}^{r^*})h_{jj}^{r^*}+\sum_{3\le i<j\le n}h_{ii}^{r^*}h_{jj}^{r^*})\\
&-[\sum_{j=3}^n(h_{1j}^{1^*})^2+\sum_{j=3}^n(h_{1j}^{j^*})^2+\sum_{j=3}^n(h_{1j}^{2^*})^2+2\sum_{3\le i<j\le n}(h_{1j}^{i^*})^2]\\
&-[\sum_{2\le i,j\le n,i\ne j}(h_{jj}^{i^*})^2+\sum_{j\ge
3}(h_{2j}^{1^*})^2+ \sum_{3\le i<j\le n}(h_{ij}^{1^*})^2+3\sum_{2\le
i<j<r\le n}(h_{ij}^{r^*})^2]\Big],
\end{aligned}
\end{equation}
Then by \eqref{symm}, we have
\begin{equation}\label{3.0new}
\begin{aligned}
R_{1212}=&c+\frac{1}{2}(n^2H^2-S)-
\Big[\sum_{r=1}^{n}(\sum_{j=3}^n(h_{11}^{r^*}+h_{22}^{r^*})h_{jj}^{r^*}+\sum_{3\le i<j\le n}h_{ii}^{r^*}h_{jj}^{r^*})\\
&-[\sum_{j=3}^n(h_{11}^{j^*})^2+\sum_{j=3}^n(h_{jj}^{1^*})^2+\sum_{j=3}^n(h_{12}^{j^*})^2+2\sum_{3\le i<j\le n}(h_{ij}^{1^*})^2]\\
&-[\sum_{2\le i,j\le n,i\ne j}(h_{jj}^{i^*})^2+\sum_{j\ge
3}(h_{12}^{j^*})^2+
\sum_{3\le i<j\le n}(h_{ij}^{1^*})^2+3\sum_{2\le i<j<r\le n}(h_{ij}^{r^*})^2]\Big]\\
=&c+\frac{1}{2}(n^2H^2-S)+II_1+II_2+2\sum_{j=3}^n(h_{12}^{j^*})^2+3\sum_{3\le
i<j\le n}(h_{ij}^{1^*})^2+3\sum_{2\le i<j<r\le n}(h_{ij}^{r^*})^2,
\end{aligned}
\end{equation}
where
\begin{equation}\label{3.01}
\begin{aligned}
&II_1=-\sum_{r=1}^2\Big[\sum_{j=3}^n(h_{11}^{r^*}+h_{22}^{r^*})h_{jj}^{r^*}+\sum_{3\le i<j\le n}h_{ii}^{r^*}h_{jj}^{r^*}-\sum_{j=3}^n(h_{jj}^{r^*})^2\Big],\\
&II_2=-\sum_{r=3}^n\Big[\sum_{j=3}^n(h_{11}^{r^*}+h_{22}^{r^*})h_{jj}^{r^*}+\sum_{3\le
i<j\le n}h_{ii}^{r^*}h_{jj}^{r^*}-\sum_{j\ne
r}(h_{jj}^{r^*})^2\Big].
\end{aligned}
\end{equation}

After a straightforward computation, we can
rewrite $II_1$ and $II_2$ as following (see also \cite{BDFV}) by use of \eqref{Hr}:
\begin{equation}\label{3.02}
\begin{aligned}
II_1
=&\frac{1}{2(n+1)}\sum_{r=1}^2\Big[\sum_{j=3}^n((h_{11}^{r^*}+h_{22}^{r^*})-3h_{jj}^{r^*})^2+3\sum_{3\le
i<j\le n}(h_{ii}^{r^*}-h_{jj}^{r^*})^2\\
&-(n-2)(h_{11}^{r^*}+\cdots+h_{nn}^{r^*})^2\Big]\\
=&\frac{1}{2(n+1)}\sum_{r=1}^2\Big[\sum_{j=3}^n((h_{11}^{r^*}+h_{22}^{r^*})-3h_{jj}^{r^*})^2+3\sum_{3\le
i<j\le n}(h_{ii}^{r^*}-h_{jj}^{r^*})^2-(n-2)n^2H_r^2\Big]\\
=&\sum_{r=1}^2\Big\{\frac{3}{2(n+1)(2n+3)}n^2H_r^2-\frac{2n-3}{2(2n+3)}n^2H_r^2\\
&+\frac{1}{2(n+1)}
\big[\sum_{j=3}^n((h_{11}^{r^*}+h_{22}^{r^*})-3h_{jj}^{r^*})^2+3\sum_{3\le
i<j\le n}(h_{ii}^{r^*}-h_{jj}^{r^*})^2\big]\Big\},
\end{aligned}
\end{equation}
\begin{equation}\label{3.022}
\begin{aligned}
II_2
=&\sum_{r=3}^n\Big\{-\frac{2n-3}{2(2n+3)}(h_{11}^{r^*}+\cdots+h_{nn}^{r^*})^2+
\frac{1}{2(2n+3)}\big[\sum_{j\geq 3,j\neq r}(2(h_{11}^{r^*}+h_{22}^{r^*})-3h_{jj}^{r^*})^2\\
&+(2n+3)(h_{11}^{r^*}-h_{22}^{r^*})^2+6\sum_{3\le i<j\le n,i\ne r,j\ne r}(h_{ii}^{r^*}-h_{jj}^{r^*})^2\\
&+2\sum_{j\ge 3,j\ne
r}(h_{rr}^{r^*}-3h_{jj}^{r^*})^2+3(h_{rr}^{r^*}-2(h_{11}^{r^*}+h_{22}^{r^*}))^2
\big]\Big\}\\
=&\sum_{r=3}^n\Big\{-\frac{2n-3}{2(2n+3)}n^2H_r^2+
\frac{1}{2(2n+3)}\big[\sum_{j\geq 3,j\neq r}(2(h_{11}^{r^*}+h_{22}^{r^*})-3h_{jj}^{r^*})^2\\
&+(2n+3)(h_{11}^{r^*}-h_{22}^{r^*})^2+6\sum_{3\le i<j\le n,i\ne r,j\ne r}(h_{ii}^{r^*}-h_{jj}^{r^*})^2\\
&+2\sum_{j\ge 3,j\ne
r}(h_{rr}^{r^*}-3h_{jj}^{r^*})^2+3(h_{rr}^{r^*}-2(h_{11}^{r^*}+h_{22}^{r^*}))^2
\big]\Big\}.
\end{aligned}
\end{equation}
Hence, we get
\begin{equation}\label{3.1}
\begin{aligned}
R_{1212}
=&\frac{1}{2}(\frac{6n^2H^2}{2n+3}+2c-S)+\frac{3}{2(n+1)(2n+3)}(n^2H_1^2+n^2H_2^2)\\
&+2\sum_{j=3}^{n}(h_{12}^{j^*})^2 +3\sum_{3\le i<j\le
n}(h_{ij}^{1^*})^2+ 3\sum_{2\le i<j<r\le n}(h_{ij}^{r^*})^2
\\&+\frac{1}{2(n+1)}\sum_{r=1}^2\Big[\sum_{j=3}^n((h_{11}^{r^*}+h_{22}^{r^*})-3h_{jj}^{r^*})^2+3\sum_{3\le i<j\le n}(h_{ii}^{r^*}-h_{jj}^{r^*})^2\Big]\\
&+\frac{1}{2(2n+3)}\sum_{r=3}^n\Big[\sum_{j\geq 3,j\neq r}(2(h_{11}^{r^*}+h_{22}^{r^*})-3h_{jj}^{r^*})^2+(2n+3)(h_{11}^{r^*}-h_{22}^{r^*})^2\\
&+6\sum_{3\le i<j\le n,i\ne r,j\ne r}(h_{ii}^{r^*}-h_{jj}^{r^*})^2+2\sum_{j\ge 3,j\ne r}(h_{rr}^{r^*}-3h_{jj}^{r^*})^2+3(h_{rr}^{r^*}-2(h_{11}^{r^*}+h_{22}^{r^*}))^2
\Big]\\
\geq&
\frac{1}{2}(\frac{6n^2H^2}{2n+3}+2c-S)+2\sum_{j=3}^{n}(h_{12}^{j^*})^2
+3\sum_{3\le i<j\le n}(h_{ij}^{1^*})^2\\&+ 3\sum_{2\le i<j<r\le
n}(h_{ij}^{r^*})^2+\frac{1}{2}\sum_{r=3}^n(h_{11}^{r^*}-h_{22}^{r^*})^2\\
\geq& \frac{1}{2}(\frac{6n^2H^2}{2n+3}+2c-S),
\end{aligned}
\end{equation}
and the equality in the last inequality holds only if
\begin{equation}\label{r1212}
\left\{
\begin{aligned}
&h_{11}^{1^*}=-h_{22}^{1^*}=a_1,~h_{11}^{2^*}=-h_{22}^{2^*}=a_2,\\
&h_{11}^{r^*}=h_{22}^{r^*}=3b_r,~h_{jj}^{r^*}=4b_r,~h_{rr}^{r^*}=12b_r,~j,r\ge
3,~j\ne r,
\end{aligned}
\right.
\end{equation}
for some numbers $a_1,a_2,b_r$, and the other components of the
second fundamental form are $0$.

By using the first Bianchi identity, Gauss equation \eqref{24} and
\eqref{symm}, we have
\begin{equation}\label{32}
\begin{aligned}
&-R_{1234}\\
=&R_{1342}+R_{1423}\\
=&\sum_{r=1}^n(h_{14}^{r^*}h_{23}^{r^*}-h_{12}^{r^*}h_{34}^{r^*}+h_{12}^{r^*}h_{34}^{r^*}-h_{13}^{r^*}h_{24}^{r^*})\\
=&\sum_{r\ge 5}(h_{14}^{r^*}h_{23}^{r^*}-h_{13}^{r^*}h_{24}^{r^*})\\
&+h_{14}^{1^*}h_{23}^{1^*}-h_{12}^{1^*}h_{34}^{1^*}+h_{12}^{1^*}h_{34}^{1^*}-h_{13}^{1^*}h_{24}^{1^*}
+h_{14}^{2^*}h_{23}^{2^*}-h_{12}^{2^*}h_{34}^{2^*}+h_{12}^{2^*}h_{34}^{2^*}-h_{13}^{2^*}h_{24}^{2^*}\\
&+h_{14}^{3^*}h_{23}^{3^*}-h_{12}^{3^*}h_{34}^{3^*}+h_{12}^{3^*}h_{34}^{3^*}-h_{13}^{3^*}h_{24}^{3^*}
+h_{14}^{4^*}h_{23}^{4^*}-h_{12}^{4^*}h_{34}^{4^*}+h_{12}^{4^*}h_{34}^{4^*}-h_{13}^{4^*}h_{24}^{4^*}\\
=&\sum_{r\ge
5}(h_{14}^{r^*}h_{23}^{r^*}-h_{13}^{r^*}h_{24}^{r^*})\\
&+(h_{33}^{2^*}-h_{11}^{2^*})h_{34}^{1^*}+(h_{11}^{4^*}-h_{33}^{4^*})h_{12}^{3^*}
+(h_{11}^{2^*}-h_{44}^{2^*})h_{34}^{1^*}+(h_{44}^{3^*}-h_{11}^{3^*})h_{12}^{4^*}\\
&+(h_{22}^{1^*}-h_{33}^{1})h_{23}^{4^*}+(h_{33}^{4}-h_{22}^{4^*})h_{23}^{1^*}
+(h_{44}^{1^*}-h_{22}^{1^*})h_{23}^{4^*}+(h_{22}^{3^*}-h_{44}^{3^*})h_{12}^{4^*},
\end{aligned}
\end{equation}
which together with \eqref{3.1} immediately give that
\begin{equation}\label{3.2}
\begin{aligned}
&R_{1313}+\lambda^2R_{1414}+\mu^2R_{2323}+\lambda^2\mu^2R_{2424}-2\lambda\mu
R_{1234}\\
\geq&\frac{1+\lambda^2+\mu^2+\lambda^2\mu^2}{2}(\frac{6n^2H^2}{2n+3}+2c-S)\\
&+2\sum_{j\ne 1,3}(h_{13}^{j^*})^2+3\sum_{i,j\ne
1,3,i<j}(h_{ij}^{1^*})^2
+3\sum_{i<j<r,i\ne 1}(h_{ij}^{r^*})^2+\frac{1}{2}\sum_{r\ne 1,3}(h_{11}^{r^*}-h_{33}^{r^*})^2\\
&+\lambda^2(2\sum_{j\ne 1,4}(h_{14}^{j^*})^2+3\sum_{i,j\ne
1,4,i<j}(h_{ij}^{1^*})^2
+3\sum_{i<j<r,i\ne 1}(h_{ij}^{r^*})^2+\frac{1}{2}\sum_{r\ne 1,4}(h_{11}^{r^*}-h_{44}^{r^*})^2)\\
&+\mu^2(2\sum_{j\ne 2,3}(h_{23}^{j^*})^2+3\sum_{i,j\ne
2,3,i<j}(h_{ij}^{2^*})^2
+3\sum_{i<j<r,i\ne 2,j\ne 2}(h_{ij}^{r^*})^2+\frac{1}{2}\sum_{r\ne 2,3}(h_{22}^{r^*}-h_{33}^{r^*})^2)\\
&+\lambda^2\mu^2(2\sum_{j\ne 2,4}(h_{24}^{j^*})^2+3\sum_{i,j\ne
2,4,i<j}(h_{ij}^{2^*})^2
+3\sum_{i<j<r,i\ne 2,j\ne 2}(h_{ij}^{r^*})^2+\frac{1}{2}\sum_{r\ne 2,4}(h_{22}^{r^*}-h_{44}^{r^*})^2)\\
&+2\lambda\mu\Big[\sum_{r\ge
5}(h_{14}^{r^*}h_{23}^{r^*}-h_{13}^{r^*}h_{24}^{r^*})\\
&+(h_{33}^{2^*}-h_{11}^{2^*})h_{34}^{1^*}+(h_{11}^{4^*}-h_{33}^{4^*})h_{12}^{3^*}
+(h_{11}^{2^*}-h_{44}^{2^*})h_{34}^{1^*}+(h_{44}^{3^*}-h_{11}^{3^*})h_{12}^{4^*}\\
&+(h_{22}^{1^*}-h_{33}^{1})h_{23}^{4^*}+(h_{33}^{4}-h_{22}^{4^*})h_{23}^{1^*}
+(h_{44}^{1^*}-h_{22}^{1^*})h_{23}^{4^*}+(h_{22}^{3^*}-h_{44}^{3^*})h_{12}^{4^*}\Big]\\
=&\frac{1+\lambda^2+\mu^2+\lambda^2\mu^2}{2}(\frac{6n^2H^2}{2n+3}+2c-S)\\
&+2\sum_{j\ne 1,3}(h_{13}^{j^*})^2+2\lambda^2\sum_{j\ne
1,4}(h_{14}^{j^*})^2+2\mu^2\sum_{j\ne
2,3}(h_{23}^{j^*})^2+2\lambda^2\mu^2\sum_{j\ne
2,4}(h_{24}^{j^*})^2\\
&+2\lambda\mu\sum_{r\ge
5}(h_{14}^{r^*}h_{23}^{r^*}-h_{13}^{r^*}h_{24}^{r^*})\\
&+3\sum_{i,j\ne 1,3,i<j}(h_{ij}^{1^*})^2 +3\sum_{i\ne
1,i<j<r}(h_{ij}^{r^*})^2+\frac{1}{2}\lambda^2\mu^2\sum_{r\ne
2,4}(h_{22}^{r^*}-h_{44}^{r^*})^2\\
&+2\lambda\mu\Big[(h_{44}^{1^*}-h_{22}^{1^*})h_{23}^{4^*}+(h_{22}^{3^*}-h_{44}^{3^*})h_{12}^{4^*}\Big]\\
&+\lambda^2[3\sum_{i,j\ne 1,4,i<j}(h_{ij}^{1^*})^2
+3\sum_{i\ne
1,i<j<r}(h_{ij}^{r^*})^2]+\frac{1}{2}\mu^2\sum_{r\ne
2,3}(h_{22}^{r^*}-h_{33}^{r^*})^2\\
&+2\lambda\mu\Big[(h_{22}^{1^*}-h_{33}^{1})h_{23}^{4^*}+(h_{33}^{4}-h_{22}^{4^*})h_{23}^{1^*}
\Big]\\
&+\mu^2[3\sum_{i,j\ne 2,3,i<j}(h_{ij}^{2^*})^2
+3\sum_{i\ne 2,j\ne 2,i<j<r}(h_{ij}^{r^*})^2]
+\frac{1}{2}\lambda^2\sum_{r\ne
1,4}(h_{11}^{r^*}-h_{44}^{r^*})^2\\
&+2\lambda\mu\Big[(h_{11}^{2^*}-h_{44}^{2^*})h_{34}^{1^*}+(h_{44}^{3^*}-h_{11}^{3^*})h_{12}^{4^*}\Big]\\
&+\lambda^2\mu^2[3\sum_{i,j\ne 2,4,i<j}(h_{ij}^{2^*})^2
+3\sum_{i\ne 2,j\ne 2,i<j<r}(h_{ij}^{r^*})^2] +\frac{1}{2}\sum_{r\ne
1,3}(h_{11}^{r^*}-h_{33}^{r^*})^2
\\
&+2\lambda\mu\Big[(h_{33}^{2^*}-h_{11}^{2^*})h_{34}^{1^*}+(h_{11}^{4^*}-h_{33}^{4^*})h_{12}^{3^*}
\Big]\\
\geq&\frac{1+\lambda^2+\mu^2+\lambda^2\mu^2}{2}(\frac{6n^2H^2}{2n+3}+2c-S).
\end{aligned}
\end{equation}

Hence, by the assumption that $S\le\frac{6n^2H^2}{2n+3}+2c$, we get
\begin{equation}\label{3.20}
\begin{aligned}
&R_{1313}+\lambda^2R_{1414}+\mu^2R_{2323}+\lambda^2\mu^2R_{2424}-2\lambda\mu
R_{1234}\geq 0,
\end{aligned}
\end{equation}
 i.e. $M\times\mathbb{R}^2$ has nonnegative isotropic
curvature.
\end{proof}

\noindent\textbf{Proof of Theorem \ref{main}:} We denote by
$\tilde{M}$ the universal cover of $M$, from Lemma \ref{Lemma2.5} we
know that $M\times\mathbb{R}^2$ has nonegative isotropic curvature,
hence $\tilde{M}\times\mathbb{R}^2$ also has nonegative isotropic
curvature. We discuss in two cases: (i) $c>0$ and (ii) $c=0$.

(i) $c>0$.

If $n=3$, for any unit tangent vector $u\in T_pM$ at the point $p\in M$, we can choose an orthonormal three-frame $\{e_1,e_2,e_3\}$ such that
$e_1=u$. From \eqref{3.1} we have
 \begin{equation}\label{3.10}
\begin{aligned}
Ric(u)&=R_{1212}+R_{1313}\\
&\geq \frac{6n^2H^2}{2n+3}+2c-S=6H^2+2c-S,
\end{aligned}
\end{equation}
and the equality holds only if
$R_{1212}=R_{1313}=\frac{1}{2}(6H^2+2c-S)$, then from \eqref{r1212}
we get the equality holds only if
 $h_{ij}^{k^*}=0,1\le i,j,k\le
3$, which means $p$ is a totally geodesic point and hence $S=6H^2$.
Hence we conclude that $M$ has positive Ricci curvature. This
together with Hamilton's theorem (see Lemma \ref{Hamilton}) imply that $M$ is
diffeomorphic to a spherical space form. In particular, if $M$ is
simply connected, then $M$ is diffeomorphic to $\mathbb{S}^3$.

If $n\geq 4$, for any unit vector $u$ at a point $p\in M$, take
$e_1=u$, then we have $Ric(u)=\sum\limits_{k=2}^nR_{1k1k}$, from
\eqref{3.1} we know that $Ric(u)\ge
\frac{n-1}{2}(\frac{6n^2H^2}{2n+3}+2c-S)$, and the equality holds
only if
$R_{1212}=\cdots=R_{1n1n}=\frac{1}{2}(\frac{6n^2H^2}{2n+3}+2c-S)$,
then from \eqref{r1212} we get the equality holds only if
$h_{ij}^{k^*}=0,1\le i,j,k\le n$, which means $p$ is a totally
geodesic point and hence $S=\frac{6n^2H^2}{2n+3}$. Hence we conclude
that $M$ has positive Ricci curvature. Since $M$ is compact and has
positive Ricci curvature, by a theorem of Myers, $\tilde{M}$ is also
compact.

For any orthonormal four-frame $\{e_1,e_2,e_3,e_4\}$ at a point
$p\in T_pM$, let $\lambda=\mu=1$ in \eqref{3.2}, we immediately get
\begin{equation}\label{3.8}R_{1313}+R_{1414}+R_{2323}+R_{2424}-2
R_{1234}\geq 2(\frac{6n^2H^2}{2n+3}+2c-S),\end{equation} and from
\eqref{3.1}, \eqref{r1212} and \eqref{3.2} we know that the equality
in \eqref{3.8} holds only if $h_{ij}^{k^*}=0,1\le i,j,k\le n$, which
means $p$ is a totally geodesic point and hence
$S=\frac{6n^2H^2}{2n+3}$. Hence we conclude that $M$ has positive
isotropic curvature, which implies that $\tilde{M}$ also has
positive isotropic curvature.

We have shown that $\tilde{M}$ is compact and has positive isotropic
curvature, by using a theorem due to Micallef and Moore (see Lemma
\ref{Micallef}), we get $\tilde{M}$ is homeomorphic to
$\mathbb{S}^n$, hence $\tilde{M}$ is locally irreducible and the
locally symmetric metric of $\tilde{M}$ would have to be of constant
positive sectional curvature (see also Remark (ii) of \cite{Harish}
and Lemma 2.4 of \cite{Xu-Gu2}). As $\tilde{M}$ has positive
isotropic curvature, from a theorem of Micallef and Wang (see Lemma
\ref{Micallef2}), $\tilde{M}$ can not be a K\"{a}hler manifold.
Since $\tilde{M}\times\mathbb{R}^2$ has nonegative isotropic
curvature, by combining Theorem \ref{Lemma2.2}, we conclude that
$\tilde{M}$ is diffeomorphic to $\mathbb{S}^n$, which implies that
$M$ is diffeomorphic to a spherical space form. In particular, if
$M$ is simply connected, then  $M$ is diffeomorphic to
$\mathbb{S}^n$.

(ii) $c=0$.

If $n=3$, for any unit tangent vector $u\in T_pM$ at the point $p\in
M$, we can choose an orthonormal three-frame $\{e_1,e_2,e_3\}$ such
that $e_1=u$. From \eqref{3.1} we have
 \begin{equation}
\begin{aligned}
Ric(u)&=R_{1212}+R_{1313}\\
&\geq \frac{6n^2H^2}{2n+3}-S=6H^2-S,
\end{aligned}
\end{equation}
and the equality holds only if
$R_{1212}=R_{1313}=\frac{1}{2}(\frac{6n^2H^2}{2n+3}+2c-S)$, then
from \eqref{r1212} we get the equality holds only if
 $h_{ij}^{k^*}=0,1\le i,j,k\le
3$, which means $p$ is a totally geodesic point and hence $S=6H^2$.
Hence we get $Ric(u)\geq 0$ and $Ric(u)=0$ can only happen at the
totally geodesic points. Since $M$ is a compact submanifold in
$\mathbb{C}^n$, $M$ can not be minimal and hence can not be totally
geodesic, which implies that there exists a point $p\in M$ such that
$Ric(u)>0$ for any unit tangent vector $u\in T_pM$, i.e. we get the
Ricci curvature of $M$ is quasi-positive, then by Aubin's theorem
(see Lemma \ref{Aubin}) we have $M$ admits a metric with positive Ricci
curvature. This together with Hamilton's theorem (see Lemma \ref{Hamilton})
imply that $M$ is diffeomorphic to a spherical space form. In
particular, if $M$ is simply connected, then $M$ is diffeomorphic to
$\mathbb{S}^3$.

If $n\ge 4$, after a same argument with the case for $n=3$, we get
the Ricci curvature of $M$ is quasi-positive, then by Aubin's
theorem (see Lemma \ref{Aubin}) we have $M$ admits a metric with
positive Ricci curvature. Since $M$ is compact and has positive
Ricci curvature, by a theorem of Myers, $\tilde{M}$ is also compact.

For any orthonormal four-frame $\{e_1,e_2,e_3,e_4\}$ at a point
$p\in T_pM$, let $\lambda=\mu=1$ in \eqref{3.2}, we immediately get
\begin{equation}\label{3.14}R_{1313}+R_{1414}+R_{2323}+R_{2424}-2
R_{1234}\geq 2(\frac{6n^2H^2}{2n+3}-S),\end{equation} and from
\eqref{3.1}, \eqref{r1212} and \eqref{3.2} we know that the equality
in \eqref{3.14} holds only if $h_{ij}^{k^*}=0,1\le i,j,k\le n$,
which means $p$ is a totally geodesic point and hence
$S=\frac{6n^2H^2}{2n+3}$. We conclude that $M$ has nonnegative
isotropic curvature and has positive isotropic curvature for some
point in $M$, which together with Lemma \ref{Lemma2.3} imply that
$M$ admits a metric with positive isotropic curvature. Therefore,
$\tilde{M}$ also admits a metric with positive isotropic curvature.

We have shown that $\tilde{M}$ is compact and has admits a metric
with positive isotropic curvature, by using a theorem due to
Micallef and Moore (see Lemma \ref{Micallef}), we get $\tilde{M}$ is
homeomorphic to $\mathbb{S}^n$, hence $\tilde{M}$ is locally
irreducible and the locally symmetric metric of $\tilde{M}$ would
have to be of constant positive sectional curvature (see also Remark
(ii) of \cite{Harish} and Lemma 2.4 of \cite{Xu-Gu2}). As
$\tilde{M}$ admits a metric with positive isotropic curvature, from
a theorem of Micallef and Wang (see Lemma \ref{Micallef2}),
$\tilde{M}$ can not be a K\"{a}hler manifold. Since
$\tilde{M}\times\mathbb{R}^2$ has nonegative isotropic curvature, by
combining Theorem \ref{Lemma2.2}, we conclude that $\tilde{M}$ is
diffeomorphic to $\mathbb{S}^n$, which implies that $M$ is
diffeomorphic to a spherical space form. In particular, if $M$ is
simply connected, then  $M$ is diffeomorphic to $\mathbb{S}^n$.

This completes the proof of Theorem \ref{main}.

\section{A Differentiable Sphere Theorem
for compact Lagrangian submanifolds in a  K\"{a}hler manifold}
In this section, we extend Theorem \ref{main} to compact Lagrangian submanifolds in a K\"{a}hler manifold. We have the following theorem:
\begin{thm}\label{main2}
Let $M$ be an $n(\geq 3$)-dimensional compact Lagrangian submanifold
in a K\"{a}hler manifold $\bar{M}^n$. We denote by $S$ the norm
square of the second fundamental form and $H$ the mean curvature.
Let $\bar{K}(u\wedge v)$ denote the sectional curvature of the
2-dimensional subspace of $T_p\bar{M}^n $ spanned by $u$ and $v$.
Assume that
\begin{equation}\label{pinchingnew}
S\le\frac{3n^2H^2}{n+\frac{3}{2}}+\frac{2}{3}(4\delta-\Delta),
\end{equation}
where $\Delta=\max_{u,v\in T_p\bar{M}^n,\langle
u,Jv\rangle=0}\bar{K}(u\wedge v),~\delta=\min_{u,v\in
T_p\bar{M}^n,\langle u,Jv\rangle=0}\bar{K}(u\wedge v)$, and if
$4\delta-\Delta\equiv 0$ on $\bar{M}^n$, we assume moreover that $M$
is not totally geodesic. Then we have $M$ is diffeomorphic to a
spherical space form. In particular, if $M$ is simply connected,
then $M$ is diffeomorphic to $\mathbb{S}^n$.
\end{thm}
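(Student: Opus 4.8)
The plan is to follow the proof of Theorem \ref{main} almost verbatim, the only structural change being that the Gauss equation \eqref{24} gets replaced by its general K\"{a}hler form
\begin{equation*}
R_{ijkl}=\bar{R}_{ijkl}+\sum_{r=1}^n(h_{ik}^{r^*}h_{jl}^{r^*}-h_{il}^{r^*}h_{jk}^{r^*}),
\end{equation*}
where $\bar{R}_{ijkl}$ denote the components, in the adapted Lagrangian frame, of the curvature tensor of $\bar M^n$ restricted to the tangent directions $e_1,\dots,e_n$. The whole point is that the purely second-fundamental-form part of every computation in the proof of Lemma \ref{Lemma2.5} --- namely \eqref{3.0}--\eqref{3.1} and \eqref{32}--\eqref{3.2} --- is untouched, and only the old ``constant-curvature contribution'' ($c$, resp.\ $2c$) has to be re-estimated in terms of $\delta$ and $\Delta$.

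First I would record two facts about the ambient curvature along $M$. Since $M$ is Lagrangian, for tangent $X,Y$ the vector $JY$ is normal and hence orthogonal to $X$; therefore every $2$-plane $\pi\subset T_pM$ is totally real ($\langle u,Jv\rangle=0$ for any spanning pair), and so $\delta\le\bar K(\pi)\le\Delta$. In particular $\delta\le\bar R_{ijij}\le\Delta$ for $i\ne j$. For the single mixed component $\bar R_{1234}$ I would invoke the classical estimate of Berger, applied to the restriction of $\bar R$ to the $4$-dimensional subspace $V=\mathrm{span}\{e_1,e_2,e_3,e_4\}\subset T_pM$: all $2$-planes of $V$ are totally real, so the sectional curvatures of $\bar R|_V$ lie in $[\delta,\Delta]$ and hence $|\bar R_{1234}|\le\tfrac23(\Delta-\delta)$. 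Combining these with the elementary inequality $4|\lambda\mu|\le(1+\lambda^2)(1+\mu^2)$ gives
\begin{equation*}
\bar R_{1313}+\lambda^2\bar R_{1414}+\mu^2\bar R_{2323}+\lambda^2\mu^2\bar R_{2424}-2\lambda\mu\bar R_{1234}\geq\frac{(1+\lambda^2)(1+\mu^2)}{2}\cdot\frac{2}{3}(4\delta-\Delta),
\end{equation*}
and likewise $\bar R_{1212}\ge\delta$ in the spot where $c$ appeared in \eqref{3.0}. Thus $\tfrac23(4\delta-\Delta)$ plays exactly the role of the old $2c$, and the computation of Lemma \ref{Lemma2.5} then yields: under \eqref{pinchingnew}, $M\times\mathbb{R}^2$ has nonnegative isotropic curvature, while for each $k$ one has $R_{1k1k}\ge\tfrac12(\tfrac{6n^2H^2}{2n+3}+2\delta-S)+(\text{a sum of squares})\ge\tfrac{\Delta-\delta}{3}\ge0$, so $\mathrm{Ric}\ge0$ on $M$; putting $\lambda=\mu=1$ in the same estimate gives $R_{1313}+R_{1414}+R_{2323}+R_{2424}-2R_{1234}\ge0$ as well.

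Next I would upgrade these to quasi-positivity. Exactly as in the proof of Theorem \ref{main}, an inspection of the equality cases of \eqref{3.1}, \eqref{r1212} and \eqref{3.2} shows that at a point where $\mathrm{Ric}$ (resp.\ the four-plane sum) vanishes the second fundamental form must take the special shape \eqref{r1212}, which by the total symmetry \eqref{symm} forces $h=0$ there; then $S=H=0$ forces $\bar R_{1212}=0$ and $4\delta-\Delta\le0$ at that point. Hence $\mathrm{Ric}>0$ (and the four-plane sum is $>0$) at every point where $h\ne0$, or where $4\delta-\Delta>0$. A short sign analysis closes the argument: if $4\delta-\Delta\equiv0$ then the hypothesis that $M$ is not totally geodesic furnishes a point with $h\ne0$; if $4\delta-\Delta\not\equiv0$ then at a point where it is positive we are done, whereas at a point where it is negative the pinching \eqref{pinchingnew} would be violated unless $H\ne0$ there, so again $h\ne0$ somewhere. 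Thus $M$ has quasi-positive Ricci curvature, and quasi-positive isotropic curvature when $n\ge4$. From here the proof is word-for-word that of Theorem \ref{main}: for $n=3$, apply Lemma \ref{Aubin} and then Lemma \ref{Hamilton}; for $n\ge4$, use Lemma \ref{Aubin} and Myers' theorem to see that the universal cover $\tilde M$ is compact, Lemma \ref{Lemma2.3} to pass to a metric of positive isotropic curvature, Lemma \ref{Micallef} to get $\tilde M$ homeomorphic to $\mathbb S^n$ (hence locally irreducible), Lemma \ref{Micallef2} to exclude the K\"{a}hler alternative, and finally Theorem \ref{Lemma2.2} together with the nonnegative isotropic curvature of $\tilde M\times\mathbb R^2$ to conclude that $\tilde M$ is diffeomorphic to $\mathbb S^n$; hence $M$ is diffeomorphic to a spherical space form, and to $\mathbb S^n$ if it is simply connected.

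I expect the main obstacle to be essentially bookkeeping: re-running the long identities \eqref{3.0}--\eqref{3.2} with a non-constant $\bar R_{ijkl}$ and checking that every ambient term that survives is either one of the sectional curvatures $\bar R_{ijij}$ (pinned to $[\delta,\Delta]$) or the single mixed component $\bar R_{1234}$ (bounded by Berger), so that the numerology again collapses precisely to $\tfrac23(4\delta-\Delta)$ and reproduces the pinching constant $\tfrac{3n^2H^2}{n+3/2}$. The one genuinely delicate point is the treatment of the degenerate sub-cases ($4\delta-\Delta\equiv0$, or $M$ totally geodesic), where one must verify that the extra hypothesis in the statement is exactly what is needed to locate a point of strict positivity.
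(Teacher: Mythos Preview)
Your proposal is correct and follows essentially the same approach as the paper: replace the constant-curvature Gauss equation by $R_{ijkl}=\bar R_{ijkl}+\sum_r(h_{ik}^{r^*}h_{jl}^{r^*}-h_{il}^{r^*}h_{jk}^{r^*})$, use that every tangent $2$-plane of a Lagrangian submanifold is totally real so that $\delta\le\bar R_{ijij}\le\Delta$, invoke Berger's inequality $|\bar R_{1234}|\le\tfrac23(\Delta-\delta)$, and then rerun the proof of Theorem~\ref{main} with $\tfrac23(4\delta-\Delta)$ in place of $2c$. The paper's own proof is in fact much terser than yours---it simply records the new Gauss equation and Berger's bound and then writes ``Theorem~\ref{main2} can be proved after an argument analogous to that in the proof of Theorem~\ref{main}, we omit the details here''---so your explicit verification of the ambient-curvature estimate $\bar R_{1313}+\lambda^2\bar R_{1414}+\mu^2\bar R_{2323}+\lambda^2\mu^2\bar R_{2424}-2\lambda\mu\bar R_{1234}\ge\tfrac{(1+\lambda^2)(1+\mu^2)}{3}(4\delta-\Delta)$ and your case analysis for quasi-positivity (showing that a zero of the Ricci or isotropic expression forces $h=0$ and $4\delta-\Delta=0$ at that point, then using the extra hypothesis when $4\delta-\Delta\equiv0$) actually supply details the paper omits.
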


\begin{proof}
Let $M$ be a Lagrangian submanfold in a K\"{a}hler manifold
$\bar{M}^n$, if we denote the Levi-Civita connections on $M$,
$\bar{M}^n$ and the normal bundle by $\nabla$, $D$ and
$\nabla_X^{\bot}$, respectively, then \eqref{21}-\eqref{symm} still
hold.

If we denote the components of
curvature tensor of $\nabla$ and $D$ by $R_{ijkl}$ and $\bar{R}_{ijkl}$, respectively, then the equations of
Gauss
 are given by
\begin{equation}\label{24new}
R_{ijkl}=\bar{R}_{ijkl}
+\sum_{r=1}^n(h_{ik}^{r^*}h_{jl}^{r^*}-h_{il}^{r^*}h_{jk}^{r^*}).
\end{equation}

By a similar method of  proving Berger's inequality (see
\cite{Berger}, or see \cite{Andrews2}, Lemma 2.50), for orthonormal frames
$\{e_i,e_j,e_k,e_l\}$ which are orthogonal to
$\{Je_i,Je_j,Je_k,Je_k\}$, we have
\begin{equation}\label{Berger}
|\bar{R}_{ijkl}|\le \frac{2}{3}(\Delta-\delta).
\end{equation}

By using \eqref{24new} and \eqref{Berger} together instead of
\eqref{24}, Theorem \ref{main2} can be proved after a argument
analogous to that in the proof of Theorem \ref{main}, we omit the
details here.
\end{proof}

\begin{rem}
We note that Theorem \ref{main2} is a generalization of Theorem
\ref{main}. In fact, in Theorem \ref{main2}, if we take $\bar{M}^n$
to be a complex space form $\bar{M}^n(4c)$ with $c\ge 0$, then we
immediately get Theorem \ref{main}.
\end{rem}

\bigskip
\noindent {\bf Acknowledgements}: The authors would like to express
their thanks to Professor Ben Andrews for his helpful discussions
and valuable suggestions.

\begin{flushleft}
\medskip\noindent

Haizhong Li: {\sc Department of Mathematical Sciences, Tsinghua
University, Beijing 100084, People's Republic of China.} \\ E-mail:
hli@math.tsinghua.edu.cn

Xianfeng Wang: {\sc School of Mathematical Sciences, Nankai
University, Tianjin 300071, People's Republic of China.} \\E-mail:
wangxianfeng@nankai.edu.cn

\end{flushleft}

\end{document}